\definecolor{midnightblue}{rgb}{0.1, 0.1, 0.44}
\setlist[enumerate,1]{label=(\roman*)}
\newcommand{\loc}{\mathrm{loc}}
\DeclareMathOperator{\TV}{TV}
\newcommand{\dx}{\d x}
\newcommand{\dy}{\d y}
\newcommand{\dt}{\d t}
\newcommand{\ds}{\d s}
\newcommand{\g}{\mathfrak{g}}
\newcommand{\G}{\mathcal{G}}
\newcommand{\U}{\mathcal{U}}
\newcommand{\V}{\mathcal{V}}
\newcommand{\W}{\mathcal{W}}
\newcommand{\II}{I\!I}
\begin{document}

\title{Deterministic particle method for \\ nonlinear nonlocal scalar balance equations}
\author{
	Emanuela Radici\thanks{Università degli Studi dell'Aquila, \url{emanuela.radici@univaq.it}}
	\and
	Federico Stra\thanks{\url{stra.federico@gmail.com}}
}

\maketitle

\begin{abstract}
	We study a deterministic particle scheme to solve a scalar balance equation with nonlocal interaction
	and nonlinear mobility used to model congested dynamics.
	The main novelty with respect to ``Radici--Stra [SIAM J.Math.Anal 55.3 (2023)]'' is the presence of a
	source term; this causes the solutions to no longer be probability measures, thus requiring a suitable
	adaptation of the numerical scheme and of the estimates leading to compactness.
\end{abstract}

\tableofcontents

\section{Introduction}

This work is devoted to the study of a deterministic particle scheme approximating an entropy solution
of the scalar balance equation
\begin{equation}\label{eq:pde}
	\de_t\rho(t,x) + \de_x\bigl[
		\rho(t,x) \underbrace{v\bigl(\rho(t,x)\bigr)}_\text{congestion} \bigl(
		\underbrace{V(t,x)}_\text{advection}
		- \underbrace{(\de_x W*\rho)(t,x)}_\text{interaction}
		\bigr)\bigr]
	= \underbrace{f\bigl(t,x,\rho(t,x)\bigr)}_\text{source}
	.
\end{equation}
The PDE \eqref{eq:pde} models the evolution of a density $\rho$ under the effect of
an external advection field $V$, a nonlocal self interaction potential $W$, and a source term $f$.
The intensity of the free velocity field $U=V-\de_xW*\rho$ is modulated by a congestion penalization
$v(\rho)$ which is a non-increasing function of the pointwise density. The product $m(\rho)=\rho v(\rho)$
is called mobility and in our setting it can be nonlinear.

This PDE finds applications in various fields ranging from chemistry, to biology and social/economical
sciences, as several models used to describe the macroscopic evolution of many interacting agents
capturing behaviors such as queue and pattern formation fall into this category.

The numerical scheme is inspired by a sequence of works for similar conservation laws without the
source term.
In particular we mention \cite{DiFrancesco-Rosini,DiFrancesco-Stivaletta} for the case $W=0$ and
\cite{DiFrancesco-Fagioli-Radici,Fagioli-Tse,Radici-Stra-2023} for evolutions with self interaction.
Along this line of research there are also results for the conservation law with diffusion instead of the
source term, see for instance \cite{Fagioli-Radici,Daneri-Radici-Runa-2023}.

The evolution of a solution $\rho$ of the PDE \eqref{eq:pde} is approximated by piecewise constant
densities $\bar\rho^N$ constructed as follows. For $N\in\setN_+$, given $N+1$ sorted particles
$(x_0,\dots,x_N)$ and $N$ non-negative masses $(q_1,\dots,q_N)$, define the associated piecewise constant
density
\begin{subequations}\label{eq:rhobar}
	\begin{equation}\label{eq:rhobar-rho}
		\bar\rho^N(t,x) = \sum_{i=1}^N \rho_i(t) \bm1_{\bigl(x_{i-1}(t),x_i(t)\bigr)},
	\end{equation}
	where
	\begin{align}\label{eq:rho_i}
		\rho_i(t) & = \frac{q_i(t)}{x_i(t)-x_{i-1}(t)} \quad \forall i\in\{1,\dots,N\}, &
		\rho_0(t) & = \rho_{N+1}(t) = 0;
	\end{align}
	define also the total mass
	\begin{equation}\label{eq:q}
		q^N(t) = \norm{\bar\rho^N(t,\plchldr)}_{L^1(\setR)} = \sum_{i=1}^N q_i(t).
	\end{equation}
\end{subequations}
\begin{figure}
	\centering
	\begin{tikzpicture}
		\tikzmath{
			\ts = 0.1;
			\x0 = -5.5; \x1 = -4; \x2 = -3;
			\xim2 = -2; \xim1 = -0.75; \xii = 0.5; \xip1 = 1.5;
			\xNm2 = 2.5; \xNm1 = 3.5; \xN = 5;
			\r1 = 0.8; \r2 = 1.2;
			\rim1 = 1; \ri = 1.5; \rip1 = 2;
			\rNm1 = 1.5; \rN = 0.75;
			\xmin = \x0-2; \xmax = \xN + 2;
		}

		\draw[-stealth] (\xmin,0) -- (\xmax,0); 

		\draw 
		(\x0,-\ts) node[below] {$x_0$} -- +(0,\ts)
		(\x1,-\ts) node[below] {$x_1$} -- +(0,\ts)
		(\x2,-\ts) node[below] {$x_2$} -- +(0,\ts)
		(\xim2,-\ts) node[below] {$x_{i-2}$} -- +(0,\ts)
		(\xim1,-\ts) node[below] {$x_{i-1}$} -- +(0,\ts)
		(\xii,-\ts) node[below] {$x_i$} -- +(0,\ts)
		(\xip1,-\ts) node[below] {$x_{i+1}$} -- +(0,\ts)
		(\xNm2,-\ts) node[below] {$x_{N-2}$} -- +(0,\ts)
		(\xNm1,-\ts) node[below] {$x_{N-1}$} -- +(0,\ts)
		(\xN,-\ts) node[below] {$x_N$} -- +(0,\ts)
		;

		\draw[dashed] 
		(\x0,0) -- +(0,\r1)
		(\x1,0) -- +(0,\r2)
		(\x2,0) -- +(0,\r2)
		(\xim2,0) -- +(0,\rim1)
		(\xim1,0) -- +(0,\ri)
		(\xii,0) -- +(0,\rip1)
		(\xip1,0) -- +(0,\rip1)
		(\xNm2,0) -- +(0,\rNm1)
		(\xNm1,0) -- +(0,\rNm1)
		(\xN,0) -- +(0,\rN)
		;

		\draw[thick] 
		(\x0-1,0) node[above] {$\rho_0$}
		(\x0,\r1) -- (\x1,\r1) node[pos=0.5,above] {$\rho_1$}
		(\x1,\r2) -- (\x2,\r2) node[pos=0.5,above] {$\rho_2$}
		(\xim2,\rim1) -- (\xim1,\rim1) node[pos=0.5,above] {$\rho_{i-1}$}
		(\xim1,\ri) -- (\xii,\ri) node[pos=0.5,above] {$\rho_i$}
		(\xii,\rip1) -- (\xip1,\rip1) node[pos=0.5,above] {$\rho_{i+1}$}
		(\xNm2,\rNm1) -- (\xNm1,\rNm1) node[pos=0.5,above] {$\rho_{N-1}$}
		(\xNm1,\rN) -- (\xN,\rN) node[pos=0.5,above] {$\rho_N$}
		(\xN+1,0) node[above] {$\rho_{N+1}$}
		;

		\path 
		(\x0,0) -- (\x1,0) node[pos=0.5,above] {$q_1$}
		(\x1,0) -- (\x2,0) node[pos=0.5,above] {$q_2$}
		(\xim2,0) -- (\xim1,0) node[pos=0.5,above] {$q_{i-1}$}
		(\xim1,0) -- (\xii,0) node[pos=0.5,above] {$q_i$}
		(\xii,0) -- (\xip1,0) node[pos=0.5,above] {$q_{i+1}$}
		(\xNm2,0) -- (\xNm1,0) node[pos=0.5,above] {$q_{N-1}$}
		(\xNm1,0) -- (\xN,0) node[pos=0.5,above] {$q_N$}
		;

		\path 
		(\x2,0) -- (\xim2,0) node[pos=0.5,above] {\dots}
		(\xip1,0) -- (\xNm2,0) node[pos=0.5,above] {\dots}
		;
	\end{tikzpicture}
	\caption{Piecewise constant density $\bar\rho^N$ associated to particles $(x_0,\dots,x_N)$ and masses
		$(q_1,\dots,q_N)$.}
	\label{fig:rhobar}
\end{figure}
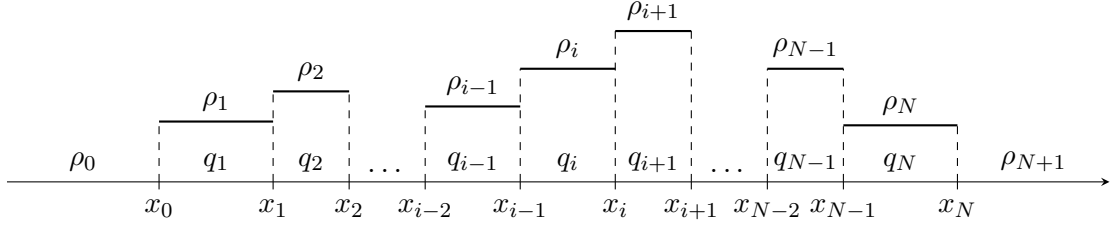
In \autoref{fig:rhobar} you can see an illustration of a piecewise constant density with its defining
parameters.
The initial datum is chosen using \autoref{lem:initial-datum}.
In order to guarantee that the evolution of $\bar\rho^N(t,\plchldr)$ approximates the PDE \eqref{eq:pde},
the particles $x_i(t)$ and the masses $q_i(t)$ are subjected to the system of ODEs
\begin{subequations}
	\label{eq:ode}
	\begin{align}
		\label{eq:ode-x}
		x'_i(t) & = v_i(t) U_i(t)
		        & \forall i\in\{0,\dots,N\}, \\
		\label{eq:ode-q}
		q'_i(t) &
		= \int_{x_{i-1}(t)}^{x_i(t)} f\bigl(t,x,\rho_i(t)\bigr)\dx
		        & \forall i\in\{1,\dots,N\},
	\end{align}
	where for $i\in\{0,\dots,N\}$ we have defined the quantities
	\begin{align}
		\label{eq:ode-U}
		U_i(t) & = V\bigl(t,x_i(t)\bigr) - (\de_x W*\bar\rho^N)\bigl(t,x_i(t)\bigr),  \\
		\label{eq:ode-v}
		v_i(t) & = \begin{cases}
			           v\bigl(\rho_i(t)\bigr),     & \text{if}\ U_i(t)<0,    \\
			           v\bigl(\rho_{i+1}(t)\bigr), & \text{if}\ U_i(t)\geq0.
		           \end{cases}
	\end{align}
\end{subequations}

Our main goal is to show that the density $\rho$ obtained as a limit of $\bar\rho^N$ is an \emph{entropy
	solution} of the PDE \eqref{eq:pde} in the sense of \autoref{def:entropy-solution} (see \cite{Kruzkov}).
The reason is that it is well known that weak solutions are not unique in general for a PDE of the form
\eqref{eq:pde}. On the other hand, uniqueness of entropy solutions is known for a variety of classes of
PDEs which include the one we study in the case $f=0$; see for instance
\cite{Colombo-Mercier-Rosini,Karlsen-Risebro-2003,Marconi-Radici-Stra-2022}. All the uniqueness and
stability results currently available are insufficient to deduce the uniqueness of entropy solutions in
our more general setting. We conjecture that it should hold and can be obtained with a suitable
modification of \cite{Marconi-Radici-Stra-2022}, which we plan on doing in the future.

\newcommand{\tmpcite}{\cite{Kruzkov}}
\begin{definition}[Entropy solution, \tmpcite]\label{def:entropy-solution}
	Let $v,V,W,f$ satisfy the \autoref{ass}.

	We say that a non-negative function
	$\rho\in C\bigl([0,T);L^1_\loc(\setR)\bigr)\cap L^\infty_\loc\bigl([0,T);BV(\setR)\bigr)$
	is an entropy solution of \eqref{eq:pde} if, letting $U=V-\de_xW*\rho$, the
	following entropy inequality
	\begin{equation*}
		\begin{split}
			\int_0^T\!\!\int_\setR &\Bigl\{
			\abs{\rho-c}\de_t\phi + \sign(\rho-c)\bigl[\bigl(m(\rho)-m(c)\bigr)U(t,x)\de_x\phi
				-m(c)\de_xU(t,x)\phi + f(t,x,\rho)\phi
				\bigr]
			\Bigr\} \dx\dt
			\geq 0
		\end{split}
	\end{equation*}
	holds for every constant $c\geq0$ and non-negative test function
	$\phi\in C^\infty_c\bigl([0,T)\times\setR;[0,\infty)\bigr)$.
\end{definition}

With this notion of solution we can now state our main result. The assumptions that we make on the
congestion $v$, the field $V$, the potential $W$, and the source term $f$ are listed separately right
afterwards.

\begin{theorem}\label{thm:main}
	Let $v,V,W,f$ satisfy the \autoref{ass} and let $\rho_0\in\Meas_+(\setR)\cap L^\infty(\setR)\cap
		BV(\setR)$ be a compactly supported non-negative measure.
	For every $N\in\setN_+$, let $\bar\rho^N$ be the piecewise constant density associated via
	\eqref{eq:rhobar} to particles $(x_0,\dots,x_N)$ and masses $(q_1,\dots,q_N)$ solving the system of
	ODEs \eqref{eq:ode} starting from an initial condition given by \autoref{lem:initial-datum}.

	Then, up to subsequence, $(\bar\rho^N)_{N\in\setN_+}$ converges in
	$L^1_\loc\bigl([0,\infty)\times\setR\bigr)$ to a non-negative density
	\[
		\rho \in L^\infty_\loc\bigl([0,\infty)\times\setR\bigr)
		\cap BV_\loc\bigl([0,\infty)\times\setR\bigr)
		\cap C\bigl([0,\infty);L^1(\setR)\bigr)
	\]
	which is an entropy solution of \eqref{eq:pde} in the sense of \autoref{def:entropy-solution} with
	initial datum $\rho_0$. Moreover $\rho$ enjoys the properties
	\begin{align*}
		\norm{\rho(t,\plchldr)}_{L^1(\setR)} & \leq \norm{\rho_0}_1 Q(t), &
		\rho(t,\plchldr)                     & \leq R(t),                   \\
		\supp \rho(t,\plchldr)               & \subseteq [-S(t),S(t)],    &
		\TV\bigl(\rho(t,\plchldr)\bigr)      & \leq B(t),
	\end{align*}
	for every $t\in[0,\infty)$ for some increasing functions $Q,R,S,B:[0,\infty)\to[0,\infty)$.
\end{theorem}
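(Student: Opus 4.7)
My plan is to follow the standard roadmap for convergence of Lagrangian schemes, adapted to the non-mass-conserving setting. The work is organised in four steps: well-posedness of the ODE system, a priori estimates on $\bar\rho^N$, compactness, and identification of the limit as an entropy solution.

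\textbf{Step 1 (Well-posedness of the scheme).} I would first show that \eqref{eq:ode} admits a unique global solution on $[0,\infty)$ with the particles remaining strictly ordered, $x_0(t)<x_1(t)<\dots<x_N(t)$. Local existence follows from Cauchy--Lipschitz once one rewrites $v_i$ as an upwind selector; the key point is the no-collision property. If two consecutive particles were to approach each other, then the density $\rho_i(t)=q_i(t)/(x_i(t)-x_{i-1}(t))$ would blow up, but the upwind choice in \eqref{eq:ode-v} together with the fact that $v(\rho)\to0$ as $\rho\to\infty$ (from the \autoref{ass} on the congestion) makes the relative velocity $x_i'-x_{i-1}'$ vanish before collision, so particles cannot cross. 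The global-in-time extension then follows from the support and mass bounds derived below.

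\textbf{Step 2 (A priori estimates).} I would next establish the four quantitative bounds matching the statement of the theorem.
For the \emph{total mass}, I would differentiate $q^N(t)=\sum_i q_i(t)$ using \eqref{eq:ode-q} and control $\int f(t,x,\rho_i)\dx$ in terms of $q^N(t)$ via the assumed sublinear growth of $f$, obtaining a Grönwall inequality giving $q^N(t)\leq\norm{\rho_0}_1 Q(t)$.
For the \emph{support}, the ODE \eqref{eq:ode-x} at $i=0$ and $i=N$ gives $|x_0'|,|x_N'|\leq\norm{v}_\infty\norm{U}_\infty$, and $\norm{U}_\infty$ is controlled by $\norm{V}_\infty$ plus $\norm{\de_x W}_\infty\cdot q^N$, yielding $S(t)$.
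For the \emph{$L^\infty$ bound}, I would write an ODE for $\rho_i$ by differentiating $\rho_i=q_i/(x_i-x_{i-1})$; using the upwind structure of $v_i$ one checks that the convective contribution has the right sign whenever $\rho_i$ is a discrete maximum, so it cannot increase beyond the limit set by the source term, which provides only an exponential amplification $R(t)$.
For the \emph{BV bound}, the most delicate estimate, I would write the time derivative of $\sum_i|\rho_{i+1}-\rho_i|$ and exploit a telescoping/cancellation property at each pair of consecutive cells (analogous to Radici--Stra 2023), up to extra terms generated by $\de_x V$, $\de_{xx} W*\bar\rho^N$, and the source $f$, all of which are absorbed by a Grönwall argument using the previous bounds.

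\textbf{Step 3 (Compactness).} Combining the uniform $L^\infty$ and BV bounds in space with an equicontinuity-in-time estimate (which I would obtain by checking that $\bar\rho^N$ is Lipschitz in time with values in $L^1$, using \eqref{eq:ode-x}--\eqref{eq:ode-q} to bound the displacement of each step function), a generalised Aubin--Lions or Helly-type theorem yields a subsequence converging in $L^1_\loc([0,\infty)\times\setR)$ and a.e.\ to some limit $\rho$. The bounds in the statement of the theorem pass to the limit by lower semicontinuity, and $\rho\in C([0,\infty);L^1(\setR))$ by the uniform time equicontinuity.

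\textbf{Step 4 (Entropy inequality).} Finally, I would derive a discrete entropy inequality satisfied by $\bar\rho^N$. For each Kruzkov constant $c\geq0$ and test function $\phi$, testing $\bar\rho^N$ and manipulating the ODEs should produce an inequality of the form of \autoref{def:entropy-solution} up to a residual $R^N(\phi,c)$. I expect to show $R^N\to0$ as $N\to\infty$ thanks to the monotone upwind choice of $v_i$ (which gives the correct sign for the numerical dissipation) and the a priori bounds. Passing to the limit in each term is standard once strong $L^1_\loc$ convergence and the uniform bounds are in hand; the interaction term $\de_xW*\bar\rho^N$ converges strongly because of the support and mass bounds, and the source term passes by dominated convergence using the $L^\infty$ bound.

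\textbf{Main obstacle.} The hardest point is the BV estimate in Step 2: unlike the mass-preserving setting of Radici--Stra 2023, the masses $q_i(t)$ now evolve through \eqref{eq:ode-q}, so the discrete jumps $\rho_{i+1}-\rho_i$ are perturbed by source contributions that do not telescope. Controlling those contributions uniformly in $N$ (so that the eventual Grönwall constant does not explode with the partition) is the key technical difficulty. The second delicate point is ensuring that the no-crossing property in Step 1 holds globally in time despite the $q_i$'s growing; this requires the $L^\infty$ bound of Step 2 to be independent of $N$, making Steps 1 and 2 genuinely intertwined.
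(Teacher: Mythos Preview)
Your roadmap matches the paper's structure closely (a priori bounds $\to$ compactness $\to$ discrete entropy inequality $\to$ pass to the limit), but there is one genuine gap and one methodological difference worth flagging.

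\textbf{Gap in Step 1 and the $L^\infty$ bound.} You assert that the no-collision property follows because ``$v(\rho)\to0$ as $\rho\to\infty$ (from the \autoref{ass} on the congestion)''. This is not part of the assumptions: \ref{as:v} only asks that $v$ be non-increasing and locally Lipschitz, and \ref{as:no-collapse} is a \emph{dichotomy}. Under option \ref{as:v-decays} one can indeed deduce $v(r)\to0$, but under option \ref{as:W-repulsive} (repulsive $W$, i.e.\ $w(t)\leq0$) the congestion need not vanish at all. In that second case the mechanism preventing blow-up of $\rho_i$ is different: one exploits the one-sided Lipschitz estimate $-(U_i-U_{i-1})/(x_i-x_{i-1})\leq C_1$ coming from the sign of the singular part of $D_x\de_xW$, and this feeds directly into a Gr\"onwall inequality for $\max_i\rho_i$ (see \autoref{lem:diff-U} and \autoref{prop:bound-density}). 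Your argument as written covers only the first branch of the dichotomy; the second branch needs this separate treatment. Relatedly, your description of the $L^\infty$ bound (``the convective contribution has the right sign at a discrete maximum'') is too optimistic: what one actually gets from the upwind choice is $x_i'-x_{i-1}'\geq v(\rho_i)(U_i-U_{i-1})$, and one still has to control $(U_i-U_{i-1})$ via \autoref{lem:diff-U}, which is where the two cases of \ref{as:no-collapse} diverge (Bihari vs.\ Gr\"onwall).

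\textbf{Difference in Step 3.} The paper does not prove $L^1$-in-time equicontinuity directly. Instead it introduces a generalized distance $\g$ dominated by both $W_1$ and $\norm{\plchldr}_1$ (\autoref{def:g}, \autoref{lem:g}) and applies a Rossi--Savar\'e compactness criterion: the advective displacement of particles is controlled in $W_1$, the mass variation from the source in $\norm{\plchldr}_1$, and $\g$ glues the two. Your claim that $t\mapsto\bar\rho^N(t,\plchldr)$ is uniformly Lipschitz with values in $L^1$ is in fact correct, but to make it work you must use the uniform $BV$ bound from Step~2 to convert the particle displacements into an $L^1$ estimate (a translate of a step function differs from itself in $L^1$ by roughly $\TV\cdot$shift); merely bounding $|x_i'|$ as you suggest is not enough on its own. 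So your approach is a legitimate and arguably more direct alternative, at the cost of making the compactness step depend on the $BV$ estimate rather than only on mass and support bounds.
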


\begin{assumptions}\label{ass}
	There exist three non-decreasing functions $F,G,\lambda : [0,\infty)\to[0,\infty)$
	such that:
	\begin{enumerate}[label=\textup{(A\arabic*)}]
		\item \label{as:v}
		      $v\in W^{1,\infty}_\loc\bigl([0,\infty);[0,\infty)\bigr)$ is a non-increasing function satisfying
		      \[
			      \norm{v'}_{L^\infty([0,r])} \leq G(r) \qquad\forall r>0;
		      \]
		\item \label{as:V}
		      $V(t,\plchldr)\in W^{2,1}_\loc(\setR)$, with the estimates for every $t>0$ and $x>0$
		      \begin{align*}
			      \norm{V(t,\plchldr)}_{W^{1,\infty}([-x,x])} & \leq F(t)G(x), &
			      \norm{\de_x^2 V(t,\plchldr)}_{L^1([-x,x])}  & \leq F(t)G(x);
		      \end{align*}
		\item \label{as:W}
		      the function $W(t,\plchldr)$ has the following regularity in space:
		      \begin{itemize}
			      \item $W(t,\plchldr)\in W^{1,\infty}_\loc(\setR)$,
			      \item $\de_x W(t,\plchldr)\in BV_\loc(\setR)$,
			      \item $D_x\de_x W(t,\plchldr) = \de_x^2 W(t,\plchldr)\leb^1 + w(t)\delta_0$,
			      \item $\de_x^2 W(t,\plchldr)\in W^{1,1}_\loc\bigl((-\infty,0]\bigr)\cap W^{1,1}_\loc\bigl([0,\infty)\bigr)$ with weak derivative denoted by $\de_x^3 W$,
		      \end{itemize}
		      with the estimates for every $t>0$ and $x>0$
		      \begin{align*}
			      \norm{\de_x W(t,\plchldr)}_{L^\infty([-x,x])}   & \leq F(t)G(x) , &
			      \abs{w(t)}                                      & \leq F(t) ,       \\
			      \norm{\de_x^2 W(t,\plchldr)}_{L^\infty([-x,x])} & \leq F(t)G(x) , &
			      \norm{\de_x^3 W(t,\plchldr)}_{L^1([-x,x])}      & \leq F(t)G(x) ;
		      \end{align*}
		\item \label{as:mild-growth}
		      for every $t>0$ and $x\in\setR$ we have
		      \begin{align*}
			      \sign(x)V(t,x)       & \leq F(t)\lambda(x), &
			      -\sign(x)\de_xW(t,x) & \leq F(t)\lambda(x)
		      \end{align*}
		      and the function $1/\lambda$ is not integrable at infinity, i.e.\ for every $x_0>0$ we have
		      \[
			      \int_{x_0}^\infty \frac{\dx}{\lambda(x)} = \infty;
		      \]
		\item \label{as:no-collapse}
		      at least one of the two following options holds:
		      \begin{enumerate}[label=\textup{(A\arabic{enumi}$_v$\hspace{-.2ex})}]
			      \item \label{as:v-decays}
			            $v$ decays sufficiently fast so that there exists a non-decreasing function
			            $g:[0,\infty)\to[0,\infty)$ such that $r+r^2v(r) \leq g(r)$ and $1/g$ is not integrable
			            at infinity, i.e.\ for every $r_0>0$ we have
			            \[
				            \int_{r_0}^\infty \frac{\d r}{g(r)} = \infty;
			            \]
		      \end{enumerate}
		      \begin{enumerate}[label=\textup{(A\arabic{enumi}$_W$\!)}]
			      \item \label{as:W-repulsive}
			            $W$ is expansive at small scales, i.e.\ $w(t)\leq0$ for every $t>0$;
		      \end{enumerate}
		\item \label{as:f}
		      there is a non-negative constant $c_f\in[0,\infty)$ such that
		      for every $t>0$ and $x\in\setR$ we have
		      \[
			      \abs{f(t,x,\rho)} \leq c_f F(t)\rho;
		      \]
		      moreover for every $t>0$ and $r>0$ there is a locally finite non-negative Borel measure
		      $\eta_{t,r}\in\Meas_+(\setR)$ such that
		      \begin{align*}
			      \abs{D_xf(t,\plchldr,\rho)}                       & \leq \eta_{t,r} \quad\forall\rho\in[0,r], &
			      \norm{\de_\rho f(t,x,\plchldr)}_{L^\infty([0,r])} & \leq F(t) G(r).
		      \end{align*}
	\end{enumerate}
\end{assumptions}

\section{Theoretical analysis}

This section is structured as follow:
\begin{itemize}

	\item in \autoref{sub:a-priori} we prove uniform a priori bounds for the total mass, support, density
	      and $BV$ norm of the numerical approximations $\bar\rho^N$, which in particular are required to
	      obtain their global existence in time;

	\item in \autoref{sub:compactness} we show that the uniform bounds coupled with an equicontinuity in
	      time with respect to a suitable weak topology are sufficient to obtain the compactness
	      in $L^1_\loc\bigl([0,\infty)\times\setR\bigr)$ of the family $(\bar\rho^N)_{N\in\setN_+}$,
	      thus deducing the existence of a non-negative limit density $\rho$;

	\item in \autoref{sub:entropy} we show that the numerical approximations $\bar\rho^N$ satisfy an
	      approximate entropy inequality with an error term that vanishes as $N\to\infty$
	      and we pass to the limit this information to deduce that $\rho$ is an entropy solution
	      of \eqref{eq:pde}.

\end{itemize}

We start with a preliminary lemma explaining how we select an initial condition $\bar\rho^N(0,\plchldr)$
for the ODE \eqref{eq:ode} from the initial datum $\rho_0$ of the PDE \eqref{eq:pde}.

\begin{lemma}[Initial datum]\label{lem:initial-datum}
	Given a non-negative density $\rho \in L^\infty(\setR;[0,\infty)) \cap BV(\setR)$ with compact support,
	for every $N\in\setN_+$ there exist $N+1$ sorted particles $(x_0,\dots,x_N)$ and $N$ non-negative masses
	$(q_1,\dots,q_N)$ such that the associated piecewise constant density $\bar\rho^N$ satisfies
	\begin{gather*}
		\supp(\bar\rho^N) = \operatorname{conv}\bigl(\supp(\rho)\bigr), \qquad\qquad
		\bar\rho^N \leq \norm{\rho}_{L^\infty(\setR)}, \qquad\qquad
		\TV(\bar\rho^N) \leq \TV(\rho), \\
		\lim_{N\to\infty} \norm{\bar\rho^N-\rho}_{L^1(\setR)} = 0, \hspace{2cm}
		\lim_{N\to\infty} \max_{i\in\{1,\dots,N\}} q_i = 0.
	\end{gather*}
\end{lemma}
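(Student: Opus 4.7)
The plan is to define the particles and masses via the equal-mass quantile partition. Let $M = \norm{\rho}_{L^1(\setR)}$, let $F(x) = \int_{-\infty}^x \rho(y)\dy$ be the cumulative distribution, and let $[a,b] = \operatorname{conv}(\supp\rho)$. Since $\rho\in L^\infty$, $F$ is Lipschitz and in particular continuous. I set $x_0 = a$, $x_i = \inf\{x\geq a : F(x)\geq iM/N\}$ for $i=1,\dots,N$, and $q_i = M/N$ for every $i$. Continuity of $F$ gives $F(x_i) = iM/N$ (so $x_N = b$) and the strict inequality $x_{i-1} < x_i$; since $\int_{I_i}\rho\dx = M/N = q_i$ where $I_i = [x_{i-1},x_i]$, the density $\rho_i = q_i/\abs{I_i}$ on $I_i$ equals the average of $\rho$ over $I_i$.

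The pointwise properties are then immediate: $\rho_i > 0$ yields $\supp\bar\rho^N = [a,b] = \operatorname{conv}(\supp\rho)$; being an average gives $\rho_i \leq \norm{\rho}_{L^\infty(\setR)}$; and $\max_i q_i = M/N \to 0$. For the total variation bound, I would write $\TV(\bar\rho^N) = \rho_1 + \sum_{i=1}^{N-1}\abs{\rho_{i+1}-\rho_i} + \rho_N$ and, exploiting that each $\rho_i$ is the average of $\rho$ on $I_i$, use an intermediate value argument for BV functions to select points $y_i \in I_i$ whose essential value $\rho(y_i)$ equals $\rho_i$. Then $\abs{\rho_{i+1}-\rho_i} \leq \TV(\rho;[y_i,y_{i+1}])$, the boundary terms $\rho_1$ and $\rho_N$ are absorbed into the tail variation of $\rho$ outside $[y_1,y_N]$, and telescoping yields $\TV(\bar\rho^N) \leq \TV(\rho)$.

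The main obstacle is the $L^1(\setR)$ convergence $\bar\rho^N\to\rho$, because the equal-mass partition need not become uniformly fine in $x$ when $\rho$ has gaps in its support. I would handle this in two steps. First, for $\phi \in C^1_c(\setR)$, since $\int_{I_i}(\rho_i-\rho)\dx = 0$ I can subtract $\phi(x_{i-1})$ on each cell and use $\abs{\phi(x)-\phi(x_{i-1})} \leq \norm{\phi'}_{L^\infty}\abs{I_i}$ together with $\int_{I_i}\abs{\rho_i-\rho}\dx \leq 2q_i = 2M/N$ (since $\int_{I_i}(\rho-\rho_i)_+\dx \leq \int_{I_i}\rho\dx = M/N$) to obtain
\[
\biggl\lvert \int_\setR \phi(\bar\rho^N-\rho)\dx \biggr\rvert
\leq \sum_i \norm{\phi'}_{L^\infty}\abs{I_i}\cdot\frac{2M}{N}
\leq \frac{2M(b-a)}{N}\norm{\phi'}_{L^\infty} \xrightarrow{N\to\infty} 0,
\]
i.e.\ $\bar\rho^N \rightharpoonup \rho$ weakly. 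Second, the uniform bounds $\bar\rho^N \leq \norm{\rho}_{L^\infty}$, $\TV(\bar\rho^N) \leq \TV(\rho)$ and $\supp\bar\rho^N \subseteq [a,b]$ give strong relative compactness in $L^1(\setR)$ by Helly's selection theorem; every subsequence of $(\bar\rho^N)_N$ thus admits a further subsequence converging strongly in $L^1$, whose limit must agree with the weak limit $\rho$. Hence the whole sequence converges in $L^1(\setR)$ to $\rho$.
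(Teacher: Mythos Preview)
Your construction via equal-mass quantiles with $q_i=\norm{\rho}_1/N$ is exactly the paper's; the paper then defers the $L^\infty$, $\TV$ and $L^1$ properties to earlier works, so your proposal simply fills in details that the paper does not spell out. Your arguments are correct, with the only caveat that the intermediate-value step for the $\TV$ bound should allow $y_i$ to be a jump point where $\rho_i$ lies between the one-sided limits (a BV function need not attain its average pointwise), but this is routine.
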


\begin{proof}
	Let $\operatorname{conv}\bigl(\supp(\rho)\bigr) = [x_0,x_N]$. For every $i\in\{1,\dots,N-1\}$ define
	\[
		x_i = \min\set*{x\in[x_0,x_N]}{\rho([x_0,x]) = \frac iN \norm{\rho}_1},
	\]
	and for every $i\in\{1,\dots,N\}$ let $q_i = \norm{\rho}_1/N$.

	The fact that $\supp(\bar\rho^N) = \operatorname{conv}\bigl(\supp(\rho)\bigr)$ and $q_i\to0$ are
	immediate. The other properties can be obtained following the same proof of
	\cite[Lemma~1.2]{Radici-Stra-2023} or \cite[Lemma~3.9]{Marconi-Radici-Stra-2022}.
\end{proof}

Notice that as an immediate consequence of \eqref{eq:rho_i} we have that the derivative of the density
consists of two contributions arising from the advection and the source term respectively:
\begin{equation}\label{eq:ode-rho}
	\begin{split}
		\rho_i'(t)
		&= -q_i(t)\frac{x_i'(t)-x_{i-1}'(t)}{\bigl(x_i(t)-x_{i-1}(t)\bigr)^2} + \frac{q_i'(t)}{x_i(t)-x_{i-1}(t)} \\
		&= -\rho_i(t)\frac{x_i'(t)-x_{i-1}'(t)}{x_i(t)-x_{i-1}(t)} + \frac{q_i'(t)}{x_i(t)-x_{i-1}(t)} \\
		&= -\rho_i(t)\frac{v_i(t)U_i(t)-v_{i-1}(t)U_{i-1}(t)}{x_i(t)-x_{i-1}(t)}
		+ \dashint_{x_{i-1}(t)}^{x_i(t)} f\bigl(t,x,\rho_i(t)\bigr)\dx \\
		&= \rho_i'^{\mathrm{adv}}(t) + \rho_i'^{\mathrm{src}}(t).
	\end{split}
\end{equation}

\subsection{Properties of the numerical approximation\texorpdfstring{ $\bar\rho^N$}{}}\label{sub:a-priori}

In this subsection we study the properties of a single approximate solution with $N\in\setN$ fixed,
so for simplicity of notation we drop the explicit dependence on $N$ and write $\bar\rho=\bar\rho^N$ instead.

From the \autoref{ass} it is simple to verify that the right hand side of \eqref{eq:ode} is locally Lipschitz
with respect to the particles $(x_0,\dots,x_N)$ and the masses $(q_1,\dots,q_N)$;
therefore we have local existence and uniqueness of solutions of \eqref{eq:ode} for any initial datum
by Carathéodory theorem. We refer to \cite[Lemma~2.1]{Radici-Stra-2023} for the details of the proof
(the source term is not problematic because the assumption \ref{as:f} is enough to deduce the desired
regularity w.r.t.\ $x_i$ and $q_i$).

All the quantities involved in the estimates of the two following lemmas are computed at a fixed time,
therefore for notational convenience we omit the explicit time dependence.

The following lemma encapsulates the crucial consequences of assumption \ref{as:v} (the monotonicity of $v$) and of the specific choice of the downstream density in the computation of the congestion \eqref{eq:ode-v}.
We would like to point out that the choice of the downstream density in \eqref{eq:ode-v} is the crucial
ingredient that makes the resulting scheme be approximately entropic; without that there would be no hope
of converging to an entropy solution (also, some of the a priori estimates would fall apart).

\begin{lemma}\label{lem:good-v}
	Let $v$ satisfy \ref{as:v} and let $x_i'$ and $v_i$ be defined by \eqref{eq:ode-x} and
	\eqref{eq:ode-v} respectively.
	Define $\sigma_i=\sign(\rho_{i+1}-\rho_i)$ and $\Delta\sigma_i=\sigma_i-\sigma_{i-1}$. Then
	\begin{align}
		\refstepcounter{equation}
		\tag{{\theequation}a}
		\label{eq:good-v-max}
		\rho_i\geq\rho_{i\pm1} \implies & x_i'-x_{i-1}' \geq v(\rho_i)(U_i-U_{i-1}),
		                                &
		                                & \forall i\in\{1,\dots,N\},                             \\
		\tag{{\theequation}b}
		\label{eq:good-v-min}
		\rho_i\leq\rho_{i\pm1} \implies & x_i'-x_{i-1}' \leq v(\rho_i)(U_i-U_{i-1}),
		                                &
		                                & \forall i\in\{1,\dots,N\},                             \\
		\label{eq:good-v-step}
		\Delta\sigma_i(x_i'-x_{i-1}')   & \leq \Delta\sigma_i v(\rho_i) (U_i-U_{i-1}),
		                                &
		                                & \forall i\in\{1,\dots,N\},                             \\
		\label{eq:good-v-c}
		\bigl(\sign(\rho_{i+1}-c)-      & \sign(\rho_i-c)\bigr) \bigl(v_i-v(c)\bigr) U_i \leq 0,
		                                &
		                                & \forall i\in\{0,\dots,N\}.
	\end{align}
\end{lemma}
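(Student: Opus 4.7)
The plan is to reduce everything to two elementary observations about the upwinding rule \eqref{eq:ode-v}:

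\textbf{Observation 1.} From \eqref{eq:ode-v}, whenever $U_j\geq 0$ we have $v_j=v(\rho_{j+1})$, and whenever $U_j<0$ we have $v_j=v(\rho_j)$. In both cases $v_j\in\{v(\rho_j),v(\rho_{j+1})\}$.

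\textbf{Observation 2.} By monotonicity of $v$ (assumption \ref{as:v}):
\begin{itemize}
\item if $\rho_i\geq \rho_{i\pm1}$, then $v_i\geq v(\rho_i)$ and $v_{i-1}\geq v(\rho_i)$, because in either branch of \eqref{eq:ode-v} the argument of $v$ is $\leq\rho_i$;
\item if $\rho_i\leq\rho_{i\pm1}$, the reverse inequalities hold.
\end{itemize}

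\textbf{Proof of \eqref{eq:good-v-max} and \eqref{eq:good-v-min}.} Using $x_j'=v_j U_j$, I rewrite the target difference as
\[
x_i'-x_{i-1}'-v(\rho_i)(U_i-U_{i-1}) = \bigl(v_i-v(\rho_i)\bigr)U_i \;-\; \bigl(v_{i-1}-v(\rho_i)\bigr)U_{i-1}.
\]
Then I show each of the two summands has the right sign by a four-way split on $\mathrm{sign}(U_i)$ and $\mathrm{sign}(U_{i-1})$. In the local-max case, for instance, $(v_i-v(\rho_i))U_i\geq 0$ because either $U_i<0$ (making the bracket vanish by Observation 1) or $U_i\geq0$ and the bracket is $\geq0$ by Observation 2; the same logic handles the other summand, yielding \eqref{eq:good-v-max}. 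Part \eqref{eq:good-v-min} is obtained by reversing all inequalities.

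\textbf{Proof of \eqref{eq:good-v-step}.} The sign of $\Delta\sigma_i$ alone dictates which of the previous two estimates applies. If $\Delta\sigma_i>0$, the only admissible sign patterns for $(\sigma_{i-1},\sigma_i)$ force $\rho_i\leq\rho_{i-1}$ and $\rho_i\leq\rho_{i+1}$, so \eqref{eq:good-v-min} gives $x_i'-x_{i-1}'\leq v(\rho_i)(U_i-U_{i-1})$ and multiplying by the positive $\Delta\sigma_i$ preserves the inequality. If $\Delta\sigma_i<0$, symmetrically $\rho_i\geq\rho_{i\pm1}$ and \eqref{eq:good-v-max} is multiplied by a negative factor, reversing the inequality. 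The case $\Delta\sigma_i=0$ is trivial.

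\textbf{Proof of \eqref{eq:good-v-c}.} I split on the sign of $U_i$. If $U_i\geq0$, then $v_i=v(\rho_{i+1})$ and I need to show
\[
\bigl(\mathrm{sign}(\rho_{i+1}-c)-\mathrm{sign}(\rho_i-c)\bigr)\bigl(v(\rho_{i+1})-v(c)\bigr)\leq 0.
\]
The factor $\mathrm{sign}(\rho_{i+1}-c)\bigl(v(\rho_{i+1})-v(c)\bigr)$ is $\leq 0$ by monotonicity of $v$, and the remaining contribution $-\mathrm{sign}(\rho_i-c)\bigl(v(\rho_{i+1})-v(c)\bigr)$ is handled by the elementary subcase check on the relative positions of $\rho_i,\rho_{i+1},c$ (the only nontrivial cases, where $\rho_i$ and $\rho_{i+1}$ straddle $c$, are directly seen to be $\leq 0$). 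The case $U_i<0$ uses $v_i=v(\rho_i)$ and is entirely symmetric, now relying on $\mathrm{sign}(\rho_i-c)\bigl(v(\rho_i)-v(c)\bigr)\leq 0$ and the reverse sign of $U_i$.

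The whole lemma is therefore bookkeeping; there is no real obstacle, but the argument hinges on Observation 1, which is exactly the upwinding encoded in \eqref{eq:ode-v}. The emphasized remark in the text above the lemma — that this precise choice is what makes the scheme approximately entropic — is reflected in the proof by the fact that replacing $v(\rho_{i+1})$ with $v(\rho_i)$ (or vice versa) in the wrong branch of \eqref{eq:ode-v} would break exactly the sign of $(v_i-v(\rho_i))U_i$ and of the key factor in \eqref{eq:good-v-c}.
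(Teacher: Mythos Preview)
Your proof is correct and follows essentially the same approach as the paper's: the same decomposition $x_i'-x_{i-1}'-v(\rho_i)(U_i-U_{i-1})=(v_i-v(\rho_i))U_i-(v_{i-1}-v(\rho_i))U_{i-1}$ for \eqref{eq:good-v-max}--\eqref{eq:good-v-min}, and the same reduction of \eqref{eq:good-v-step} to those two inequalities via the sign of $\Delta\sigma_i$. The only cosmetic difference is that for \eqref{eq:good-v-c} you branch on the sign of $U_i$ first and then on the relative position of $\rho_i,\rho_{i+1},c$, whereas the paper does it in the opposite order; both case analyses are equivalent and rely on the same two ingredients you single out (the upwinding rule and the monotonicity of $v$).
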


\begin{proof}
	If $\rho_i\geq\rho_{i\pm1}$, then
	\[
		\left.
		\begin{aligned}
			U_i & \geq0 &  & \implies &  & v_i-v(\rho_i) = v(\rho_{i+1})-v(\rho_i) \geq 0 \\
			U_i & <0    &  & \implies &  & v_i-v(\rho_i) = v(\rho_i)-v(\rho_i) = 0
		\end{aligned}
		\right\} \implies \bigl(v_i-v(\rho_i)\bigr)U_i \geq 0
	\]
	and similarly
	\[
		\left.
		\begin{aligned}
			U_{i-1} & \geq0 &  & \implies &  & v_{i-1}-v(\rho_i) = v(\rho_i)-v(\rho_i) = 0        \\
			U_{i-1} & <0    &  & \implies &  & v_{i-1}-v(\rho_i) = v(\rho_{i-1})-v(\rho_i) \geq 0
		\end{aligned}
		\right\} \implies \bigl(v_{i-1}-v(\rho_i)\bigr)U_{i-1} \leq 0,
	\]
	therefore
	\[
		\begin{split}
			x_i'-x_{i-1}'
			= v(\rho_i)(U_i-U_{i-1}) + \bigl(v_i-v(\rho_i)\bigr)U_i - \bigl(v_{i-1}-v(\rho_i)\bigr)U_{i-1}
			\geq v(\rho_i)(U_i-U_{i-1}),
		\end{split}
	\]
	which proves \eqref{eq:good-v-max}; the proof of \eqref{eq:good-v-min} is analogous.

	Let us now move on to \eqref{eq:good-v-step}. If $\rho_i\geq\rho_{i\pm1}$, then $\Delta\sigma_i\leq0$
	and the inequality follows from \eqref{eq:good-v-max}; similarly, if $\rho_i\leq\rho_{i\pm1}$, then
	$\Delta\sigma_i\leq0$ and the inequality follows from \eqref{eq:good-v-min}. If instead
	$\rho_{i-1}<\rho_i<\rho_{i+1}$ or $\rho_{i-1}>\rho_i>\rho_{i+1}$ then $\Delta\sigma_i=0$ and the claim
	is still true.

	Finally, let us prove \eqref{eq:good-v-c}.
	If $\rho_i\leq c\leq\rho_{i+1}$ then $\sign(\rho_{i+1}-c)-\sign(\rho_i-c)\geq0$ and
	\[
		\left.
		\begin{aligned}
			U_i & \geq0 &  & \implies &  & v_i-v(c) = v(\rho_{i+1})-v(c) \leq 0 \\
			U_i & <0    &  & \implies &  & v_i-v(c) = v(\rho_i)-v(c) \geq 0
		\end{aligned}
		\right\} \implies \bigl(v_i-v(c)\bigr)U_i \leq 0;
	\]
	similarly, if $\rho_i\geq c\rho_{i+1}$, then $\sign(\rho_{i+1}-c)-\sign(\rho_i-c)\leq0$ and
	$\bigl(v_i-v(c)\bigr)U_i \geq 0$; so in both cases \eqref{eq:good-v-c} holds.
	If instead $c<\rho_i,\rho_{i+1}$ or $c>\rho_i,\rho_{i+1}$, then $\sign(\rho_{i+1}-c)-\sign(\rho_i-c)=0$
	and the inequality is trivially true.
\end{proof}

\begin{lemma}[First and second differences of the free velocity]\label{lem:diff-U}
	Let $V$ and $W$ satisfy the \autoref{ass},
	let $(x_0,\dots,x_N)$ be sorted particles and let $U_i$ be as defined in \eqref{eq:ode-U}. Then
	\begin{align}
		\label{eq:first-diff-U}
		\frac{\abs{U_i-U_{i-1}}}{x_i-x_{i-1}}
		          & \leq C_1+C_2\rho_i                          &
		\forall i & \in \{1,\dots,N\},                            \\
		\label{eq:second-diff-U}
		\abs*{\frac{U_{i+1}-U_i}{x_{i+1}-x_i} - \frac{U_i-U_{i-1}}{x_i-x_{i-1}}}
		          & \leq \Gamma_i + C_3 \abs{\rho_{i+1}-\rho_i} &
		\forall i & \in \{1,\dots,N-1\},
	\end{align}
	where
	\begin{subequations}
		\begin{align}
			C_1      & = \norm{\de_xV}_{L^\infty([x_0,x_N])}
			+ \norm{\de_x^2W}_{L^\infty([x_0-x_N,x_N-x_0])}\norm{\bar\rho}_{L^1(\setR)}, \\
			C_2      & = \abs{w},                                                        \\
			\Gamma_i & = \int_{x_{i-1}}^{x_{i+1}}
			\left( \abs{\de_x^2V(x)} + (\abs{\de_x^3W}*\bar\rho)(x) \right) \dx
			\spliteq +\norm{\de_x^2W}_{L^\infty([x_0-x_N,x_N-x_0])}
			[(q_i+q_{i+1})+(\rho_i+\rho_{i+1})(x_{i+1}-x_i)], \notag                     \\
			C_3      & = 2\norm{\de_xW}_{L^\infty([x_0-x_N,x_N-x_0])}.
		\end{align}
	\end{subequations}
	Moreover, assuming that \ref{as:W-repulsive} holds, i.e.\ $W$ is repulsive at small scales,
	then \eqref{eq:first-diff-U} can be refined with the one-sided estimate
	\begin{align}
		\label{eq:first-diff-U-repulsive}
		-\frac{U_i-U_{i-1}}{x_i-x_{i-1}} & \leq C_1 & \forall i & \in \{1,\dots,N\}.
	\end{align}
\end{lemma}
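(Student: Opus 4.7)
The common thread is that, under \ref{as:W}, the free velocity $U:=V-\de_x W*\bar\rho$ is Lipschitz with a.e.\ derivative
\[
\de_x U = \de_x V - (\de_x^2 W)*\bar\rho - w\bar\rho,
\]
the atomic contribution $w\bar\rho$ arising by convolving the $w\delta_0$ part of $D_x\de_x W = \de_x^2 W\,\leb^1 + w\delta_0$ with $\bar\rho$. Since $\bar\rho\equiv\rho_i$ on $[x_{i-1},x_i]$, the fundamental theorem of calculus gives
\[
U_i - U_{i-1} = \int_{x_{i-1}}^{x_i}\bigl[\de_x V(x) - (\de_x^2 W*\bar\rho)(x)\bigr]\dx - w\rho_i(x_i-x_{i-1}).
\]
Dividing by $x_i-x_{i-1}$ and bounding each summand by the $L^\infty$ norms from \ref{as:V} and \ref{as:W} (using in particular $\norm{\de_x^2 W*\bar\rho}_\infty \le \norm{\de_x^2 W}_\infty\norm{\bar\rho}_1$) yields \eqref{eq:first-diff-U} with the stated $C_1$ and $C_2$. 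The one-sided refinement \eqref{eq:first-diff-U-repulsive} is immediate: under \ref{as:W-repulsive}, $w\le0$ so $-w\rho_i\ge0$ contributes with the favorable sign and may be dropped from the upper bound on $-(U_i-U_{i-1})/(x_i-x_{i-1})$.

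\textbf{Second difference.} I would introduce the bounded function $\Phi := \de_x V - \de_x^2 W*\bar\rho$ and its cell average $\bar\Phi_j := \dashint_{x_{j-1}}^{x_j}\Phi\dx$; the previous computation rewrites as $(U_j-U_{j-1})/(x_j-x_{j-1}) = \bar\Phi_j - w\rho_j$, so the second difference splits cleanly as
\[
\frac{U_{i+1}-U_i}{x_{i+1}-x_i} - \frac{U_i-U_{i-1}}{x_i-x_{i-1}} = (\bar\Phi_{i+1}-\bar\Phi_i) - w(\rho_{i+1}-\rho_i).
\]
The jump piece is controlled by $\abs{w}\abs{\rho_{i+1}-\rho_i} \le 2\norm{\de_x W}_\infty\abs{\rho_{i+1}-\rho_i} = C_3\abs{\rho_{i+1}-\rho_i}$, since $w$ is the jump of $\de_x W$ at $0$ and hence bounded by twice its sup. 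For the smooth piece $\abs{\bar\Phi_{i+1}-\bar\Phi_i}$ I would combine the adjacent-averages estimate $\abs{\bar\Phi_{i+1}-\bar\Phi_i} \le \TV(\Phi;[x_{i-1},x_{i+1}])$ with an explicit computation of $D_x\Phi$. The regularity assumed in \ref{as:W}, namely $\de_x^2 W\in W^{1,1}_\loc((-\infty,0])\cap W^{1,1}_\loc([0,\infty))$, forces $D_x\de_x^2 W = \de_x^3 W\,\leb^1 + \alpha\delta_0$ with a jump $\alpha$ bounded in terms of $\norm{\de_x^2 W}_\infty$; convolving with $\bar\rho$ then gives
\[
D_x\Phi = \bigl[\de_x^2 V - (\de_x^3 W*\bar\rho) - \alpha\bar\rho\bigr]\leb^1.
\]
Integrating $\abs{D_x\Phi}$ over $[x_{i-1},x_{i+1}]$ and using $\int_{x_{i-1}}^{x_{i+1}}\bar\rho\dx = q_i+q_{i+1}$, together with the cellwise-Lipschitz bound $\abs{\Phi(x)-\Phi(x_i)}\le\int_{x_{i-1}}^{x_{i+1}}\abs{D_x\Phi}$, produces the remaining pieces of $\Gamma_i$.

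\textbf{Main obstacle.} The real work is the careful separate accounting of the three singular contributions supported at the origin: the Dirac atom $w\delta_0$ in $D_x\de_x W$ (responsible for $C_2\rho_i$ in the first bound, for the one-sided improvement under \ref{as:W-repulsive}, and for $C_3\abs{\rho_{i+1}-\rho_i}$ in the second), the jump $\alpha$ of $\de_x^2 W$ at $0$ (responsible for the $\norm{\de_x^2 W}_\infty$-contributions inside $\Gamma_i$), and the piecewise-constant structure of $\bar\rho$ (which collapses the convolutional term $w\bar\rho$ cell by cell to $w\rho_j$ and turns $\int\alpha\bar\rho$ over $[x_{i-1},x_{i+1}]$ into $\alpha(q_i+q_{i+1})$). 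Once these three effects are isolated, the four inequalities follow by the triangle inequality and $L^1$--$L^\infty$ duality.
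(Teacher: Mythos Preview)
Your treatment of \eqref{eq:first-diff-U} and \eqref{eq:first-diff-U-repulsive} is exactly what the paper does: write $\de_x U=\de_x V-\de_x^2W*\bar\rho-w\bar\rho$, average over $[x_{i-1},x_i]$, and drop the $-w\rho_i$ term when $w\le0$.

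For the second difference your route is genuinely different from the paper's and cleaner. The paper does \emph{not} introduce $\Phi$ and use a total--variation bound; instead it splits $\bar\rho=\bar\rho_{\text{near}}+\bar\rho_{\text{far}}$ with $\bar\rho_{\text{near}}=\bar\rho\rvert_{(x_{i-1},x_{i+1})}$, applies Taylor's formula with integral remainder to $V$ and to $\de_xW*\bar\rho_{\text{far}}$ (both of which are genuinely $W^{2,1}$ on $(x_{i-1},x_{i+1})$ since the singularity of $W$ at the origin is pushed away), and then handles $\de_xW*\bar\rho_{\text{near}}$ by a hands-on manipulation: introduce the average $\tilde\rho=(\rho_i+\rho_{i+1})/2$, reparametrize both cell integrals over $[0,1]$, and re-pair the four $\de_xW$ evaluations so that their arguments share a sign and the one-sided Lipschitz estimate $\norm{\de_x^2W}_\infty$ applies. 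This is where the paper's $(\rho_i+\rho_{i+1})(x_{i+1}-x_i)$ contribution to $\Gamma_i$ and the $C_3\abs{\rho_{i+1}-\rho_i}$ term come from.

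Your argument is correct, but note that it does \emph{not} reproduce the stated $\Gamma_i$ verbatim: integrating $\abs{D_x\Phi}$ gives $\int_{x_{i-1}}^{x_{i+1}}(\abs{\de_x^2V}+\abs{\de_x^3W}*\bar\rho)+\abs{\alpha}(q_i+q_{i+1})$ with $\abs{\alpha}\le 2\norm{\de_x^2W}_\infty$, i.e.\ a factor $2\norm{\de_x^2W}_\infty(q_i+q_{i+1})$ in place of the paper's $\norm{\de_x^2W}_\infty\bigl[(q_i+q_{i+1})+(\rho_i+\rho_{i+1})(x_{i+1}-x_i)\bigr]$. Your ``cellwise-Lipschitz'' remark does not recover that missing second piece and is in fact redundant with the TV bound you already used. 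The discrepancy is harmless for every later use in the paper (in \autoref{prop:bound-TV} one only needs $\sum_i\Gamma_i$ to be controlled by mass, support and density bounds, which your version also gives), so the two approaches are interchangeable in practice; but you should be aware that yours proves a slightly different inequality than the one stated.
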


\begin{proof}
	Recall that $U=V-\de_xW*\bar\rho$; we will study the contributions of the two terms separately.
	By \ref{as:V} the field $V$ is Lipschitz, therefore
	\[
		\frac{\abs{V(x_i)-V(x_{i-1})}}{x_i-x_{i-1}} \leq \norm{\de_xV}_{L^\infty([x_0,x_N])}.
	\]
	On the other hand, by \ref{as:W} the spatial derivative of $\de_xW*\bar\rho$ is
	$D_x\de_xW*\bar\rho = \de_x^2W*\bar\rho+w\delta_0*\bar\rho = \de_x^2W*\bar\rho+w\bar\rho$, hence
	\[
		\begin{split}
			\frac{\abs{(\de_xW*\bar\rho)(x_i) - (\de_xW*\bar\rho)(x_{i-1})}}{x_i-x_{i-1}}
			&\leq \dashint_{x_{i-1}}^{x_i} \abs*{(\de_x^2W*\bar\rho)(x) + w\bar\rho(x)}\dx \\
			&\leq \norm{\de_x^2W*\bar\rho}_{L^\infty([x_0,x_N])} + \abs{w}\rho_i \\
			&\leq \norm{\de_x^2W}_{L^\infty([x_0-x_N,x_N-x_0])}\norm{\bar\rho}_{L^1(\setR)} + \abs{w}\rho_i.
		\end{split}
	\]
	Combining these two estimates proves \eqref{eq:first-diff-U}.
	The one-sided estimate \eqref{eq:first-diff-U-repulsive} is obtained by noticing that,
	under the assumption \ref{as:W-repulsive}, we have
	$\de_x(\de_xW*\bar\rho) = \de_x^2W*\bar\rho+w\bar\rho \leq \de_x^2W*\bar\rho$, hence
	\[
		\begin{split}
			\frac{(\de_xW*\bar\rho)(x_i) - (\de_xW*\bar\rho)(x_{i-1})}{x_i-x_{i-1}}
			&\leq \dashint_{x_{i-1}}^{x_i} (\de_x^2W*\bar\rho)(x)\dx \\
			&\leq \norm{\de_x^2W}_{L^\infty([x_0-x_N,x_N-x_0])}\norm{\bar\rho}_{L^1(\setR)}.
		\end{split}
	\]

	Let us now move on to \eqref{eq:second-diff-U}.
	By \ref{as:V} we have $V\in W^{2,1}((x_{i-1},x_{i+1}))$, so we can apply Taylor's theorem
	centered at $x_i$ with the remainder in integral form and obtain
	\[
		\begin{split}
			\abs*{\frac{V(x_{i+1})-V(x_i)}{x_{i+1}-x_i} - \frac{V(x_i)-V(x_{i-1})}{x_i-x_{i-1}}}
			&= \abs*{\int_{x_i}^{x_{i+1}} \de_x^2V(x) \frac{x_{i+1}-x}{x_{i+1}-x_i} \dx
			+ \int_{x_{i-1}}^{x_i} \de_x^2V(x) \frac{x-x_{i-1}}{x_i-x_{i-1}} \dx} \\
			&\leq \int_{x_{i-1}}^{x_{i+1}} \abs{\de_x^2V(x)}\dx.
		\end{split}
	\]
	To treat the interaction term, let us split $\bar\rho=\bar\rho_\text{near}+\bar\rho_\text{far}$
	with $\bar\rho_\text{near} = \bar\rho\rvert_{(x_{i-1},x_{i+1})}$
	and $\bar\rho_\text{far} = \bar\rho\rvert_{(x_{i-1},x_{i+1})^c}$.
	By \ref{as:W} we have $\de_xW*\bar\rho_\text{far}\in W^{2,1}\bigl((x_{i-1},x_{i+1})\bigr)$
	with $\de_x(\de_xW*\bar\rho_\text{far}) = \de_x^2W*\bar\rho_\text{far} + w\bar\rho_\text{far}
		= \de_x^2W*\bar\rho_\text{far}$
	and similarly $\de_x^2(\de_xW*\bar\rho_\text{far}) = \de_x^3W*\bar\rho_\text{far}$;
	therefore we can treat $\de_xW*\bar\rho_\text{far}$ as we did with $V$ and obtain
	\[
		\begin{split}
			&\abs*{\frac{(\de_xW*\bar\rho_\text{far})(x_{i+1})-(\de_xW*\bar\rho_\text{far})(x_i)}{x_{i+1}-x_i}
			- \frac{(\de_xW*\bar\rho_\text{far})(x_i)-(\de_xW*\bar\rho_\text{far})(x_{i-1})}{x_i-x_{i-1}}} \\
			&\leq \int_{x_{i-1}}^{x_{i+1}} \abs{(\de_x^3W*\bar\rho_\text{far})(x)}\dx
			\leq \int_{x_{i-1}}^{x_{i+1}} (\abs{\de_x^3W}*\bar\rho_\text{far})(x)\dx \\
			&\leq \int_{x_{i-1}}^{x_{i+1}} (\abs{\de_x^3W}*\bar\rho)(x)\dx.
		\end{split}
	\]
	On the other hand, the contribution of the near field is
	\[
		\begin{split}
			&\abs*{\frac{(\de_xW*\bar\rho_\text{near})(x_{i+1})-(\de_xW*\bar\rho_\text{near})(x_i)}{x_{i+1}-x_i}
			- \frac{(\de_xW*\bar\rho_\text{near})(x_i)-(\de_xW*\bar\rho_\text{near})(x_{i-1})}{x_i-x_{i-1}}} \\
			&= \abs*{\sum_{j=i}^{i+1} \int_{x_{j-1}}^{x_j} \left(
				\frac{\de_xW(x_{i+1}-x)-\de_xW(x_i-x)}{x_{i+1}-x_i}
				-\frac{\de_xW(x_i-x)-\de_xW(x_{i-1}-x)}{x_i-x_{i-1}}
				\right) \bar\rho(x) \dx} \\
			&\leq \int_{x_{i-1}}^{x_i} \abs*{\frac{\de_xW(x_{i+1}-x)-\de_xW(x_i-x)}{x_{i+1}-x_i}} \rho_i \dx
			+\int_{x_i}^{x_{i+1}} \abs*{\frac{\de_xW(x_i-x)-\de_xW(x_{i-1}-x)}{x_i-x_{i-1}}} \rho_{i+1}\dx
			\spliteq +\abs*{
			\int_{x_i}^{x_{i+1}} \frac{\de_xW(x_{i+1}-x)-\de_xW(x_i-x)}{x_{i+1}-x_i} \rho_{i+1} \dx
			-\int_{x_{i-1}}^{x_i} \frac{\de_xW(x_i-x)-\de_xW(x_{i-1}-x)}{x_i-x_{i-1}} \rho_i \dx}.
		\end{split}
	\]
	In the first integral $x_{i+1}-x,x_i-x>0$, whereas in the second integral $x_i-x,x_{i-1}-x<0$,
	so we can use the fact that, thanks to \ref{as:W}, $\de_xW$ is locally Lipschitz in $(-\infty,0]$
	and $[0,\infty)$ to bound both of them from above with
	\[
		\begin{split}
			&\int_{x_{i-1}}^{x_i} \abs*{\frac{\de_xW(x_{i+1}-x)-\de_xW(x_i-x)}{x_{i+1}-x_i}} \rho_i \dx
			+\int_{x_i}^{x_{i+1}} \abs*{\frac{\de_xW(x_i-x)-\de_xW(x_{i-1}-x)}{x_i-x_{i-1}}} \rho_{i+1}\dx \\
			&\leq \int_{x_{i-1}}^{x_i} \norm{\de_x^2W}_{L^\infty([0,x_N-x_0])} \rho_i \dx
			+\int_{x_i}^{x_{i+1}} \norm{\de_x^2W}_{L^\infty([x_0-x_N,0])} \rho_{i+1} \dx \\
			&\leq \norm{\de_x^2W}_{L^\infty([x_0-x_N,x_N-x_0])} (q_i+q_{i+1}).
		\end{split}
	\]
	To estimate the integrals in the last line instead, we do the following:
	we replace the densities with their average $\tilde\rho=(\rho_i+\rho_{i+1})/2$ by adding and subtracting
	the same quantities, then we use the denominators to interpret the integrals as averages and parametrize
	them in $[0,1]$ with linear changes of variables, $x=x_i+s(x_{i+1}-x_i)$ for the first one and
	$x=x_{i-1}+s(x_i-x_{i-1})$ for the second, and finally we mix the four terms to pair them more wisely.
	With these manipulations we get
	\[
		\begin{split}
			&\abs*{
			\int_{x_i}^{x_{i+1}} \frac{\de_xW(x_{i+1}-x)-\de_xW(x_i-x)}{x_{i+1}-x_i} \rho_{i+1} \dx
			-\int_{x_{i-1}}^{x_i} \frac{\de_xW(x_i-x)-\de_xW(x_{i-1}-x)}{x_i-x_{i-1}} \rho_i \dx} \\
			&\leq \tilde\rho \abs*{
			\dashint_{x_i}^{x_{i+1}} [\de_xW(x_{i+1}-x)-\de_xW(x_i-x)] \dx
			-\dashint_{x_{i-1}}^{x_i} [\de_xW(x_i-x)-\de_xW(x_{i-1}-x)] \dx}
			\spliteq +\abs{\rho_i-\tilde\rho}
			\dashint_{x_{i-1}}^{x_i} \abs*{\de_xW(x_i-x)-\de_xW(x_{i-1}-x)} \dx
			\spliteq +\abs{\rho_{i+1}-\tilde\rho}
			\int_{x_i}^{x_{i+1}} \abs*{\de_xW(x_{i+1}-x)-\de_xW(x_i-x)} \dx \\
			&\leq \tilde\rho \biggl\lvert \int_0^1 \bigl[
			\de_xW\bigl((1-s)(x_{i+1}-x_i)\bigr) -\de_xW\bigl(-s(x_{i+1}-x_i)\bigr)
			\spliteq\hspace{1.5em}
			-\de_xW\bigl((1-s)(x_i-x_{i-1})\bigr) +\de_xW\bigl(-s(x_i-x_{i-1})\bigr)
			\bigr] \ds \biggr\rvert
			\spliteq +2\norm{\de_xW}_{L^\infty([x_0-x_N,x_N-x_0])}
			(\abs{\rho_i-\tilde\rho}+\abs{\rho_{i+1}-\tilde\rho}) \\
			&\leq \tilde\rho \int_0^1
			\abs*{\de_xW\bigl((1-s)(x_{i+1}-x_i)\bigr)-\de_xW\bigl((1-s)(x_i-x_{i-1})\bigr)} \ds
			\spliteq +\tilde\rho \int_0^1
			\abs*{\de_xW\bigl(-s(x_i-x_{i-1})\bigr)-\de_xW\bigl(-s(x_{i+1}-x_i)\bigr)} \ds
			\spliteq +2\norm{\de_xW}_{L^\infty([x_0-x_N,x_N-x_0])} \abs{\rho_{i+1}-\rho_i},
		\end{split}
	\]
	and now the arguments to the $\de_xW(\plchldr)$'s that we are subtracting have the same sign,
	so we can use again the Lipschitz estimates\footnote{
	Notice that
	$\abs{(x_{i+1}-x_i)-(x_i-x_{i-1})}
		=\abs[\big]{\abs{x_{i+1}-x_i}-\abs{x_i-x_{i-1}}}
		\leq\abs{x_{i+1}-x_i}+\abs{x_i-x_{i-1}}
		=x_{i+1}-x_{i-1}$.}
	\[
		\begin{split}
			&\abs*{\de_xW\bigl((1-s)(x_{i+1}-x_i)\bigr)-\de_xW\bigl((1-s)(x_i-x_{i-1})\bigr)} \\
			&\leq \norm{\de_x^2W}_{L^\infty([x_0-x_N,x_N-x_0])} (1-s) \abs{(x_{i+1}-x_i)-(x_i-x_{i-1})} \\
			&\leq \norm{\de_x^2W}_{L^\infty([x_0-x_N,x_N-x_0])} (x_{i+1}-x_{i-1})
		\end{split}
	\]
	and
	\[
		\begin{split}
			&\abs*{\de_xW\bigl(-s(x_i-x_{i-1})\bigr)-\de_xW\bigl(-s(x_{i+1}-x_i)\bigr)} \\
			&\leq \norm{\de_x^2W}_{L^\infty([x_0-x_N,x_N-x_0])} s \abs{(x_{i+1}-x_i)-(x_i-x_{i-1})} \\
			&\leq \norm{\de_x^2W}_{L^\infty([x_0-x_N,x_N-x_0])} (x_{i+1}-x_{i-1})
		\end{split}
	\]
	to establish that
	\[
		\begin{split}
			&\abs*{
			\int_{x_i}^{x_{i+1}} \frac{\de_xW(x_{i+1}-x)-\de_xW(x_i-x)}{x_{i+1}-x_i} \rho_{i+1} \dx
			-\int_{x_{i-1}}^{x_i} \frac{\de_xW(x_i-x)-\de_xW(x_{i-1}-x)}{x_i-x_{i-1}} \rho_i \dx} \\
			&\leq 2\norm{\de_xW}_{L^\infty([x_0-x_N,x_N-x_0])} \abs{\rho_{i+1}-\rho_i}
			+2\tilde\rho \norm{\de_x^2W}_{L^\infty([x_0-x_N,x_N-x_0])} (x_{i+1}-x_{i-1}).
		\end{split}
	\]
	Putting together the estimates for $V$, $\de_xW*\bar\rho_\text{far}$ and
	$\de_xW*\bar\rho_\text{near}$ concludes the proof of \eqref{eq:second-diff-U}.
\end{proof}

The constants of the previous lemma will be refined in \autoref{rmk:diff-U-constants}.

\begin{proposition}[Uniform bound of the mass]\label{prop:bound-mass}
	Let $v$, $V$, $W$, $f$ satisfy the \autoref{ass} and let $\bar\rho$ be the piecewise constant density
	associated to particles $(x_0,\dots,x_N)$ and masses $(q_1,\dots,q_N)$ solving \eqref{eq:ode}.

	Then there exists an increasing function $Q \in C\bigl([0,\infty);[1,\infty)\bigr)$ independent of $N$
	such that
	\begin{align*}
		q_i(0)Q(t)^{-1} \leq & q_i(t) \leq q_i(0)Q(t), &  & \forall t\in[0,\infty),\ i\in\{1,\dots,N\}, \\
		q(0)Q(t)^{-1}   \leq & q(t)   \leq q(0)Q(t),   &  & \forall t\in[0,\infty).
	\end{align*}
\end{proposition}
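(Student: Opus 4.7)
The plan is to deduce the estimate directly from the ODE \eqref{eq:ode-q} by combining the pointwise growth bound on $f$ given in \ref{as:f} with Grönwall's inequality. The key observation is that the mass equation gives a linear differential inequality for each $q_i(t)$, with a coefficient that depends only on $F(t)$ and the universal constant $c_f$, hence is independent of $N$ and of the particle index $i$.

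More concretely, first I would use the ODE \eqref{eq:ode-q} together with the bound $\abs{f(t,x,\rho)} \leq c_f F(t)\rho$ from \ref{as:f} to estimate
\[
	\abs{q_i'(t)} \leq \int_{x_{i-1}(t)}^{x_i(t)} c_f F(t) \rho_i(t) \dx = c_f F(t)\, \rho_i(t) \bigl(x_i(t)-x_{i-1}(t)\bigr) = c_f F(t)\, q_i(t),
\]
which is the differential inequality $-c_f F(t) q_i(t) \leq q_i'(t) \leq c_f F(t) q_i(t)$. Since $q_i(t) \geq 0$ along the flow (this non-negativity is already part of the setup, since if $q_i$ touches zero the source term is dominated by $c_f F(t) q_i = 0$, so trajectories starting in the non-negative cone stay there), Grönwall immediately yields
\[
	q_i(0)\exp\!\left(-c_f\!\int_0^t F(s)\ds\right) \leq q_i(t) \leq q_i(0)\exp\!\left(c_f\!\int_0^t F(s)\ds\right).
\]

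Next I would sum these bounds in $i$ (or equivalently repeat the argument on $q(t)=\sum_i q_i(t)$, whose derivative satisfies the same linear bound $\abs{q'(t)}\leq c_f F(t) q(t)$) to get the analogous two-sided estimate for the total mass. Defining
\[
	Q(t) = \exp\!\left(c_f\!\int_0^t F(s)\ds\right),
\]
we have $Q\in C([0,\infty);[1,\infty))$, $Q$ is increasing since $F\geq 0$, and both stated estimates follow. No real obstacle is expected here: everything reduces to a linear Grönwall argument once \ref{as:f} is invoked, and the independence of $Q$ from $N$ is automatic since $c_f$ and $F$ are structural constants of the problem.
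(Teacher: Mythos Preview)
Your proposal is correct and matches the paper's proof essentially verbatim: the paper also bounds $\abs{q_i'(t)}\leq c_fF(t)q_i(t)$ directly from \eqref{eq:ode-q} and \ref{as:f}, then applies Gr\"onwall to obtain the same $Q(t)=\exp\bigl(c_f\int_0^t F(\tau)\d\tau\bigr)$. The only addition in your write-up is the explicit remark on non-negativity of $q_i$ and the summation for the total mass, both of which the paper leaves implicit.
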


\begin{proof}
	From the ODE \eqref{eq:ode-q} and \ref{as:f} we can estimate
	\[
		\abs{q_i'(t)}
		\leq \int_{x_{i-1}(t)}^{x_i(t)} \abs{f\bigl(t,x,\rho_i(t)\bigr)} \dx
		\leq \int_{x_{i-1}(t)}^{x_i(t)} c_f F(t) \rho_i(t) \dx
		= c_f F(t) q_i(t)
	\]
	therefore the statement is true with
	\[
		Q(t) = \exp\oleft(c_f \int_0^t F(\tau)\d\tau\right). \qedhere
	\]
\end{proof}

In the following \autoref{prop:bound-support} and \autoref{prop:bound-density} we make use of a
nonlinear generalization of Grönwall's inequality known as LaSalle--Bihari--Butler--Rogers' inequality,
see \cite{Bihari,Butler-Rogers}. A statement suitable for our purposes can also be found in
\cite[Theorem~2.4]{Radici-Stra-2023}.

\begin{proposition}[Uniform bound of the support]\label{prop:bound-support}
	Let $v$, $V$, $W$, $f$ satisfy the \autoref{ass} and let $\bar\rho$ be the piecewise constant density
	associated to particles $(x_0,\dots,x_N)$ and masses $(q_1,\dots,q_N)$ solving \eqref{eq:ode},
	with initial conditions $-S_0 \leq x_0(0) < x_N(0) \leq S_0$ for some $S_0>0$.

	Then there exists an increasing function $S \in C\bigl([0,\infty);[0,\infty)\bigr)$ independent of $N$
	such that
	\[
		-S(t) \leq x_0(t) \leq x_N(t) \leq S(t), \qquad \forall t\in[0,\infty),
	\]
	or equivalently $\supp\bigl(\bar\rho(t)\bigr) \subseteq [-S(t),S(t)]$.
\end{proposition}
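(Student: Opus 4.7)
The plan is to derive a scalar differential inequality for the quantity $M(t) = \max(x_N(t), -x_0(t), 0)$ and close it via the LaSalle--Bihari--Butler--Rogers inequality. The key ingredient is assumption~\ref{as:mild-growth}, which provides a sub-$\lambda$ control of $V$ and $-\de_xW$ on the positive half-line, combined with the uniform mass bound from \autoref{prop:bound-mass}.

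First I would estimate the velocity of the rightmost particle $x_N'(t) = v_N(t) U_N(t)$ on the set $\{x_N > 0\}$. Since $\bar\rho$ is supported in $[x_0, x_N]$, both $x_N$ and each shift $x_N - y$ with $y \in \supp\bar\rho$ belong to $[0, x_N - x_0] \subseteq [0, 2M]$. Using \ref{as:mild-growth} and the monotonicity of $\lambda$,
\[
V(t, x_N) \leq F(t) \lambda(2M), \qquad -(\de_x W * \bar\rho)(t, x_N) \leq F(t) \lambda(2M)\, q(t),
\]
so $U_N(t) \leq F(t) \lambda(2M)(1 + q(t))$. Since $v$ is non-increasing and $\rho_0 = \rho_{N+1} = 0$, we have $v_N, v_0 \leq v(0)$. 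A symmetric bound holds for $-x_0'(t)$ on $\{x_0 < 0\}$, and combining with $q(t) \leq q(0) Q(t)$ from \autoref{prop:bound-mass}, one obtains in the sense of Dini derivatives
\[
M'(t) \leq v(0) F(t) \bigl(1 + q(0) Q(t)\bigr) \lambda\bigl(2 M(t)\bigr).
\]

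The conclusion then follows by comparison: let $\bar S$ solve the ODE $\bar S'(t) = v(0) F(t)(1 + q(0)Q(t)) \lambda(2\bar S(t))$ with $\bar S(0) = S_0$. The non-integrability of $1/\lambda$ at infinity ensures via LaSalle--Bihari--Butler--Rogers that $\bar S$ is globally defined and bounded on compact time intervals; a standard comparison principle then yields $M(t) \leq \bar S(t) =: S(t)$. All constants depend only on the initial data and the structural functions of the problem, hence $S$ is independent of $N$. The principal subtlety is the sign bookkeeping due to $\lambda$ being defined only on $[0,\infty)$: grouping $x_N$ and $x_0$ into the non-negative quantity $M$ and replacing the natural argument $x_N - x_0$ by $2M$ through the monotonicity of $\lambda$ is what allows the reduction to a single scalar inequality amenable to the nonlinear Gr\"onwall machinery.
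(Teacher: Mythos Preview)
Your proposal is correct and follows essentially the same approach as the paper: define the scalar quantity $s(t)=\max\{x_N(t)_+,x_0(t)_-\}$ (your $M$), use \ref{as:mild-growth} together with the mass bound of \autoref{prop:bound-mass} to derive a differential inequality of the form $s'\leq \norm{v}_\infty F(t)\bigl(1+q(0)Q(t)\bigr)\lambda\bigl(cs(t)\bigr)$, and close via Bihari's inequality. The only cosmetic difference is that you land on $\lambda(2M)$ whereas the paper writes $\lambda(s)$; your version is the safe one since $x_N-x_0\leq 2s$, and the non-integrability of $1/\lambda$ at infinity is unaffected by the factor $2$.
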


\begin{proof}
	Following the proof of \cite[Proposition 2.5]{Radici-Stra-2023}, we find that the derivative of
	$s(t)=\max\{x_0(t)_-,x_N(t)_+\}$ can be estimated as
	\[
		s'(t) \leq \norm{v}_\infty F(t)[1+q(0)Q(t)] \lambda\bigl(s(t)\bigr),
	\]
	where the only difference is that the norm of the convolution is bounded by
	\[
		\norm{\de_x W*\bar\rho}_{L^\infty([-s(t),s(t)])}
		\leq \norm{\de_x W}_{L^\infty([-2s(t),2s(t)])}\norm{\bar\rho}_{L^1(\setR)}
		\leq q(0)Q(t)F(t)\lambda(s(t)).
	\]
	By the assumption \ref{as:mild-growth} on $\lambda$ we can apply Bihari's inequality and deduce that
	\[
		s(t) \leq S(t) = \Lambda^{-1}(\Lambda(S_0) + A(t))
	\]
	where $A$ and $\Lambda$ are primitives of $FQ$ and $1/\lambda$ respectively.
\end{proof}

\begin{remark}\label{rmk:diff-U-constants}
	When time dependence is taken into account, the constants appearing in \autoref{lem:diff-U} can be refined
	with the help of the \autoref{ass}, \autoref{prop:bound-mass} and \autoref{prop:bound-support}.
	Notice in particular that $C_1(t)$, $C_2(t)$ and $C_3(t)$ are uniform in $N$.
	\begin{align}
		C_1(t)      & \leq F(t) \bigl[G\bigl(S(t)\bigr)+G\bigl(2S(t)\bigr)q(0)Q(t)\bigr], \\
		C_2(t)      & \leq F(t),                                                          \\
		\Gamma_i(t) & \leq
		\int_{x_{i-1}(t)}^{x_{i+1}(t)} \left( \abs{\de_x^2V(t,x)} + (\abs{\de_x^3W}*\bar\rho)(t,x) \right) \dx
		\spliteq +F(t)G\bigl(2S(t)\bigr) \bigl[\bigl(q_i(t)+q_{i+1}(t)\bigr)
		+\bigl(\rho_i(t)+\rho_{i+1}(t)\bigr) \bigl(x_{i+1}(t)-x_i(t)\bigr)\bigr], \notag  \\
		C_3(t)      & \leq 2F(t)G\bigl(2S(t)\bigr).
	\end{align}
\end{remark}

\begin{proposition}[Uniform bound of the density]\label{prop:bound-density}
	Let $v$, $V$, $W$, $f$ satisfy the \autoref{ass} and let $\bar\rho$ be the piecewise constant density
	associated to particles $(x_0,\dots,x_N)$ and masses $(q_1,\dots,q_N)$ solving \eqref{eq:ode},
	with initial conditions $-S_0 \leq x_0(0) < x_N(0) \leq S_0$ and $\bar\rho(0)\leq R_0$ for some $S_0,R_0>0$.

	Then there exists an increasing function $R \in C\bigl([0,\infty);[0,\infty)\bigr)$ independent of $N$
	such that $\rho_i(t) \leq R(t)$ for every $t\in[0,\infty)$ and $i\in\{1,\dots,N\}$, or equivalently
	$\bar\rho(t)\leq R(t)$.
\end{proposition}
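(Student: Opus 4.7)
The plan is to derive a differential inequality for $M(t) = \max_{i\in\{1,\dots,N\}} \rho_i(t)$ and close it via either Grönwall's or Bihari's inequality, depending on which alternative of \ref{as:no-collapse} we invoke. Since $M$ is a max of finitely many locally Lipschitz functions, it is itself locally Lipschitz, and at almost every $t$ its derivative coincides with $\rho_{i^\star}'(t)$ for any index $i^\star$ attaining the maximum at time $t$; so it suffices to estimate $\rho_{i^\star}'(t)$ pointwise using the decomposition \eqref{eq:ode-rho} into $\rho_i'^{\mathrm{adv}} + \rho_i'^{\mathrm{src}}$.

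At a maximizing index, $\rho_{i^\star} \geq \rho_{i^\star \pm 1}$, so \eqref{eq:good-v-max} applies and yields $x_{i^\star}' - x_{i^\star-1}' \geq v(\rho_{i^\star})(U_{i^\star} - U_{i^\star-1})$. Plugging this into $\rho_{i^\star}'^{\mathrm{adv}} = -\rho_{i^\star}(x_{i^\star}' - x_{i^\star-1}')/(x_{i^\star} - x_{i^\star-1})$ and using $\rho_{i^\star} v(\rho_{i^\star}) \geq 0$ gives
\[
\rho_{i^\star}'^{\mathrm{adv}}(t) \leq -\rho_{i^\star}(t)\, v(\rho_{i^\star}(t))\, \frac{U_{i^\star}(t) - U_{i^\star-1}(t)}{x_{i^\star}(t) - x_{i^\star-1}(t)}.
\]
Under \ref{as:W-repulsive}, the one-sided estimate \eqref{eq:first-diff-U-repulsive} bounds this by $C_1(t) v(0) \rho_{i^\star}$; under \ref{as:v-decays} the two-sided estimate \eqref{eq:first-diff-U} bounds it by $C_1(t) v(0) \rho_{i^\star} + C_2(t) \rho_{i^\star}^2 v(\rho_{i^\star})$, and the function $g$ from \ref{as:v-decays} was designed precisely so that $\rho + \rho^2 v(\rho) \leq g(\rho)$ absorbs both contributions. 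For the source, \ref{as:f} immediately gives $\rho_{i^\star}'^{\mathrm{src}}(t) \leq c_f F(t) \rho_{i^\star}(t)$.

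Combining yields for a.e.\ $t$ a differential inequality either of the linear form $M'(t) \leq K(t) M(t)$ (case \ref{as:W-repulsive}) or of the nonlinear form $M'(t) \leq K(t) g(M(t))$ (case \ref{as:v-decays}), where $K$ is a continuous function of $t$ built from $F$, $G$, $Q$, $S$ and hence independent of $N$ by \autoref{rmk:diff-U-constants}. In the first case Grönwall gives a uniform bound $R(t) = R_0 \exp(\int_0^t K)$; in the second case the nonintegrability of $1/g$ at infinity lets LaSalle--Bihari--Butler--Rogers (cf.\ \cite[Theorem~2.4]{Radici-Stra-2023}) produce an increasing $R \in C([0,\infty);[0,\infty))$ with $M(t) \leq R(t)$.

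The main technical point is the rigorous handling of $M(t)$'s derivative at times where the argmax changes; this is standard (select any $i^\star(t)$ in the active set and observe that the argument above applies for a.e.\ $t$), but one needs to verify that the estimates on $\rho_{i^\star}'^{\mathrm{adv}}$ are truly pointwise in $t$ and not merely averages, so that they survive taking max over $i$. A minor bookkeeping issue is to absorb the $v(0)$ factor coming from $\rho v(\rho) \leq v(0)\rho$ into $g$ in case \ref{as:v-decays}, which is possible because $g(r) \geq r$ by its defining property.
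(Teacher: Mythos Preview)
Your proposal is correct and follows essentially the same approach as the paper: estimate $\rho_{i^\star}'$ at a maximizing index using \eqref{eq:good-v-max}, bound the advective part via \eqref{eq:first-diff-U-repulsive} or \eqref{eq:first-diff-U} according to which alternative of \ref{as:no-collapse} holds, bound the source part by \ref{as:f}, and close with Grönwall or Bihari respectively. Your write-up is in fact slightly more careful than the paper's about the a.e.\ differentiability of the maximum and the absorption of the linear term into $g$ (the paper uses $\norm{v}_\infty$ where you write $v(0)$, which is the same quantity since $v$ is non-increasing).
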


\begin{proof}
	Recall the formula \eqref{eq:ode-rho} for the derivative of $\rho_i(t)$.
	From \eqref{eq:ode-q} and \ref{as:f} we can estimate
	\[
		\abs{q_i'(t)}
		\leq \int_{x_{i-1}(t)}^{x_i(t)} \abs{f\bigl(t,x,\rho_i(t)\bigr)} \dx
		\leq \int_{x_{i-1}(t)}^{x_i(t)} c_f F(t) \rho_i(t) \dx
		= c_f F(t) q_i(t).
	\]
	For a fixed time $t$, let $i\in\{1,\dots,N\}$ be such that $\rho_i(t)=\max_j\rho_j(t)$;
	then by \eqref{eq:good-v-max} of \autoref{lem:good-v} we have that
	\[
		\begin{split}
			\rho_i'(t)
			&\leq -\rho_i(t)v\bigl(\rho_i(t)\bigr)\frac{U_i(t)-U_{i-1}(t)}{x_i(t)-x_{i-1}(t)}
			+ c_f F(t) \rho_i(t).
		\end{split}
	\]
	We now have to distinguish which of the two options of assumption \ref{as:no-collapse} holds.
	\begin{itemize}
		\item Assume that \ref{as:v-decays} holds, i.e.\ $v$ decays sufficiently fast.
		      By \eqref{eq:first-diff-U} of \autoref{lem:diff-U} we get
		      \[
			      \begin{split}
				      \rho_i'(t)
				      &\leq C_1(t) \rho_i(t)v\bigl(\rho_i(t)\bigr) + C_2(t) \rho_i(t)^2v\bigl(\rho_i(t)\bigr)
				      + c_fF(t)\rho_i(t) \\
				      &\leq
				      \bigl(C_1(t) \norm{v}_\infty + c_f F(t)\bigr) \rho_i(t)
				      + C_2(t) \rho_i(t)^2v\bigl(\rho_i(t)\bigr) \\
				      &\leq \bigl(C_1(t) \norm{v}_\infty + C_2(t) + c_f F(t)\bigr)
				      \bigl[\rho_i(t) + \rho_i(t)^2v\bigl(\rho_i(t)\bigr)\bigr] \\
				      &\leq \bigl(C_1(t) \norm{v}_\infty + C_2(t) + c_f F(t)\bigr) g\bigl(\rho_i(t)\bigr).
			      \end{split}
		      \]
		      Defining $r(t)=\max_i\rho_i(t)$ for every $t>0$, we find that
		      \[
			      r'(t) \leq \bigl(C_1(t) \norm{v}_\infty + C_2(t) + c_f F(t)\bigr) g\bigl(r(t)\bigr),
		      \]
		      to which we can apply Bihari's inequality.
		\item Assume that \ref{as:W-repulsive} holds, i.e.\ $W$ is repulsive at small scales.
		      By \eqref{eq:first-diff-U-repulsive} of \autoref{lem:diff-U} we get
		      \[
			      \rho_i'(t) \leq \bigl(C_1 \norm{v}_\infty + c_f F(t)\bigr) \rho_i(t).
		      \]
		      Defining $r(t)=\max_i\rho_i(t)$ for every $t>0$, we find that
		      \[
			      r'(t) \leq \bigl(C_1 \norm{v}_\infty + c_f F(t)\bigr) r(t),
		      \]
		      to which we can apply Grönwall's inequality.
	\end{itemize}
	In both cases we deduce the existence of an increasing function $R \in C\bigl([0,\infty);[0,\infty)\bigr)$
	such that $\max_j \rho_j(t) = r(t) \leq R(t)$.
\end{proof}

With the estimates of \autoref{prop:bound-mass}, \autoref{prop:bound-support} and
\autoref{prop:bound-density}, by a standard argument described more in detail in
\cite[Proposition~2.8]{Radici-Stra-2023}, we deduce the global existence and uniqueness of solutions
of \eqref{eq:ode}.

\begin{proposition}[Uniform bound of the total variation]\label{prop:bound-TV}
	Let $v$, $V$, $W$, $f$ satisfy the \autoref{ass} and let $\bar\rho$ be the piecewise constant density
	associated to particles $(x_0,\dots,x_N)$ and masses $(q_1,\dots,q_N)$ solving \eqref{eq:ode},
	with initial conditions $-S_0 \leq x_0(0) < x_N(0) \leq S_0$, $\bar\rho(0)\leq R_0$ and
	$\TV\bigl(\bar\rho(0)\bigr)\leq B_0$ for some $S_0,R_0,B_0>0$.

	Then there exists an increasing function $B \in C\bigl([0,\infty);[0,\infty)\bigr)$ independent of $N$
	such that $\TV\bigl(\rho(t)\bigr) \leq B(t)$ for every $t\in[0,\infty)$.
\end{proposition}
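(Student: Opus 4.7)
I would derive a Grönwall-type differential inequality for $t\mapsto\TV(\bar\rho(t))$ and apply it to conclude. Writing $\TV(\bar\rho(t))=\sum_{i=0}^{N}|\rho_{i+1}(t)-\rho_i(t)|$ with the convention $\rho_0\equiv\rho_{N+1}\equiv0$, and using $\rho_0'=\rho_{N+1}'=0$, Abel summation gives, a.e.\ in $t$,
\[
\frac{\mathrm{d}}{\mathrm{d}t}\TV(\bar\rho(t))=-\sum_{i=1}^N\Delta\sigma_i\,\rho_i'(t),
\]
with $\sigma_i=\sign(\rho_{i+1}-\rho_i)$ and $\Delta\sigma_i=\sigma_i-\sigma_{i-1}$. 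By \eqref{eq:ode-rho} I split $\rho_i'=\rho_i'^{\mathrm{adv}}+\rho_i'^{\mathrm{src}}$ and estimate the two contributions separately; the goal is to show that both are bounded by $A(t)+C(t)\TV(\bar\rho(t))$ with $A,C$ increasing and $N$-independent.

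\textbf{Advection.} Since $\rho_i\geq0$, \eqref{eq:good-v-step} yields
\[
-\Delta\sigma_i\,\rho_i'^{\mathrm{adv}}\leq\Delta\sigma_i\,m(\rho_i)\frac{U_i-U_{i-1}}{x_i-x_{i-1}},\qquad m(\rho)=\rho v(\rho).
\]
Abel-summing once more, the boundary contribution is uniformly controlled via \eqref{eq:first-diff-U} and $\norm{m}_{L^\infty([0,R(t)])}$, while the interior sum equals $-\sum_{i=1}^{N-1}\sigma_i(T_{i+1}-T_i)$ with $T_i:=m(\rho_i)(U_i-U_{i-1})/(x_i-x_{i-1})$. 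Expanding
\[
T_{i+1}-T_i=[m(\rho_{i+1})-m(\rho_i)]\frac{U_{i+1}-U_i}{x_{i+1}-x_i}+m(\rho_i)\left(\frac{U_{i+1}-U_i}{x_{i+1}-x_i}-\frac{U_i-U_{i-1}}{x_i-x_{i-1}}\right),
\]
the Lipschitz constant of $m$ on $[0,R(t)]$ together with \eqref{eq:first-diff-U} and \eqref{eq:second-diff-U} give $|T_{i+1}-T_i|\leq C(t)|\rho_{i+1}-\rho_i|+R(t)\norm{v}_\infty\Gamma_i$. The first pieces assemble into $\TV(\bar\rho)$, and since each interval $[x_{i-1},x_{i+1}]$ appearing in $\Gamma_i$ overlaps at most one neighbor, \autoref{rmk:diff-U-constants} combined with \autoref{prop:bound-mass}--\autoref{prop:bound-density} bound $\sum_i\Gamma_i$ uniformly in $N$.

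\textbf{Source.} A further Abel summation reduces the source contribution to a boundary term (trivially bounded by $c_fF(t)R(t)$) plus interior increments
\[
\rho_{i+1}'^{\mathrm{src}}-\rho_i'^{\mathrm{src}}=\int_0^1\bigl[f\bigl(t,x_i+s\Delta x_i,\rho_{i+1}\bigr)-f\bigl(t,x_{i-1}+s\Delta x_{i-1},\rho_i\bigr)\bigr]\ds,
\]
with $\Delta x_i=x_{i+1}-x_i$, after rescaling the averages to $[0,1]$. I would split these into a $\rho$-variation piece, controlled by $F(t)G(R(t))|\rho_{i+1}-\rho_i|$ via $\de_\rho f$, and an $x$-variation piece, controlled via $|D_xf|\leq\eta_{t,R(t)}$ by $\int_0^1\eta_{t,R(t)}\bigl([x_{i-1}+s\Delta x_{i-1},\,x_i+s\Delta x_i]\bigr)\ds$. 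The crucial observation is that for every fixed $s$ these intervals are pairwise adjacent in $i$ (the right endpoint at $i$ coincides with the left endpoint at $i+1$), so summing telescopes to the uniform bound $\eta_{t,R(t)}\bigl([-S(t),S(t)]\bigr)$.

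Collecting all estimates yields the desired inequality $\frac{\mathrm{d}}{\mathrm{d}t}\TV(\bar\rho(t))\leq A(t)+C(t)\TV(\bar\rho(t))$, and Grönwall together with the initial estimate $\TV(\bar\rho(0))\leq\TV(\rho_0)\leq B_0$ from \autoref{lem:initial-datum} produces the required increasing function $B$. The main obstacle is precisely the source term: a pointwise bound would lead to $\sum_i\rho_i$, which is not controlled by $\TV(\bar\rho)$ nor by any of the a priori quantities, so one must extract the $BV$-in-$x$ structure of $f$ encoded by $\eta_{t,r}$ in \ref{as:f} and exploit the telescoping of the reparametrized intervals above.
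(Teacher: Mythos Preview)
Your proposal is correct and follows essentially the same strategy as the paper: differentiate $\TV(\bar\rho)$, split into advection and source, apply \eqref{eq:good-v-step} and a further summation by parts on the advection to reduce to first and second differences of $U$ (handled by \autoref{lem:diff-U}), and for the source reparametrize the averages on $[0,1]$ to separate the $\de_\rho f$ contribution from the $D_xf$ contribution controlled by $\eta_{t,R(t)}$, then close with Gr\"onwall. The only noteworthy deviation is in the $x$-variation of the source: you observe that for fixed $s$ the intervals $[(1-s)x_{i-1}+sx_i,\,(1-s)x_i+sx_{i+1}]$ tile $[x_0,x_N]$ exactly, yielding the bound $\eta_{t,R(t)}\bigl([-S(t),S(t)]\bigr)$, whereas the paper more crudely encloses each in $[x_{i-1},x_{i+1}]$ and sums to get $2\eta_{t,R(t)}\bigl([-S(t),S(t)]\bigr)$; this is a harmless sharpening and everything else matches.
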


\begin{proof}
	Let $\sigma_i=\sign(\rho_{i+1}-\rho_i)$, $\Delta\sigma_i=\sigma_i-\sigma_{i-1}$, $\Delta U_i=U_i-U_{i-1}$,
	$\Delta x_i=x_i-x_{i-1}$.

	Using \eqref{eq:ode-rho} and summing by parts the contribution arising from the advection we can compute
	\[
		\begin{split}
			\frac{\d}{\dt} \TV\bigl(\bar\rho(t)\bigr)
			&= \sum_{i=0}^N \sigma_i (\rho_{i+1}'-\rho_i')
			= \sum_{i=0}^N \sigma_i (\rho_{i+1}'^{\mathrm{adv}}-\rho_i'^{\mathrm{adv}})
			+\sum_{i=0}^N \sigma_i (\rho_{i+1}'^{\mathrm{src}}-\rho_i'^{\mathrm{src}}) \\
			&= -\sum_{i=1}^N \Delta\sigma_i \rho_i'^{\mathrm{adv}}
			+\sum_{i=0}^N \sigma_i (\rho_{i+1}'^{\mathrm{src}}-\rho_i'^{\mathrm{src}}) \\
			&= \sum_{i=1}^N \Delta\sigma_i \rho_i \frac{x_i'-x_{i-1}'}{x_i-x_{i-1}}
			+\sum_{i=0}^N \sigma_i (\rho_{i+1}'^{\mathrm{src}}-\rho_i'^{\mathrm{src}}).
		\end{split}
	\]
	Using \eqref{eq:good-v-step} of \autoref{lem:good-v} on the addends of the first sum we can estimate
	$\frac{\d}{\dt} \TV\bigl(\bar\rho(t)\bigr) \leq \Sigma_\mathrm{adv} + \Sigma_\mathrm{src}$ where
	\begin{align*}
		\Sigma_\mathrm{adv} & = \sum_{i=1}^N \Delta\sigma_i \rho_i v(\rho_i) \frac{\Delta U_i}{\Delta x_i}, \\
		\Sigma_\mathrm{src} & =
		\sum_{i=1}^{N-1} \sigma_i \left(\dashint_{x_i}^{x_{i+1}} f(t,x,\rho_{i+1})\dx
		-\dashint_{x_{i-1}}^{x_i} f(t,x,\rho_i)\dx\right)
		\spliteq +\sigma_0 \dashint_{x_0}^{x_1} f(t,x,\rho_1)\dx
		-\sigma_N \dashint_{x_{N-1}}^{x_N} f(t,x,\rho_N)\dx.
	\end{align*}
	Using \autoref{lem:diff-U} with the constants given by \autoref{rmk:diff-U-constants} we can now bound
	\[
		\begin{split}
			\Sigma_\mathrm{adv}
			&= -\sum_{i=1}^{N-1} \sigma_i \left( m(\rho_{i+1})\frac{\Delta U_{i+1}}{\Delta x_{i+1}}
			-m(\rho_i)\frac{\Delta U_i}{\Delta x_i} \right)
			+\sigma_N m(\rho_N) \frac{\Delta U_N}{\Delta x_N} -\sigma_0 m(\rho_1) \frac{\Delta U_1}{\Delta x_1} \\
			&= -\sum_{i=1}^{N-1} \sigma_i m(\rho_{i+1})
			\left( \frac{\Delta U_{i+1}}{\Delta x_{i+1}} - \frac{\Delta U_i}{\Delta x_i} \right)
			-\sum_{i=1}^{N-1} \sigma_i \bigl(m(\rho_{i+1})-m(\rho_i)\bigr) \frac{\Delta U_i}{\Delta x_i}
			\spliteq+\sigma_N m(\rho_N) \frac{\Delta U_N}{\Delta x_N} -\sigma_0 m(\rho_1) \frac{\Delta U_1}{\Delta x_1} \\
			&\leq \sum_{i=1}^{N-1} m(\rho_{i+1})
			\abs*{ \frac{\Delta U_{i+1}}{\Delta x_{i+1}} - \frac{\Delta U_i}{\Delta x_i} }
			+\sum_{i=1}^N \abs{m(\rho_{i+1})-m(\rho_i)} \abs*{\frac{\Delta U_i}{\Delta x_i}}
			+m(\rho_1) \abs*{\frac{\Delta U_1}{\Delta x_1}} \\
			&\leq \norm{m}_{L^\infty([0,R(t)])} \sum_{i=1}^{N-1} (\Gamma_i+C_3\abs{\rho_{i+1}-\rho_i})
			+\norm{m'}_{L^\infty([0,R(t)])} \sum_{i=1}^N \abs{\rho_{i+1}-\rho_i} \bigl(C_1+C_2\rho_i\bigr)
			\spliteq+\norm{m}_{L^\infty([0,R(t)])} \bigl(C_1+C_2\rho_1\bigr) \\
			&\leq \norm{v}_\infty R(t) \sum_{i=1}^{N-1} \biggl\{
			\int_{x_{i-1}(t)}^{x_{i+1}(t)} \left( \abs{\de_x^2V(t,x)} + (\abs{\de_x^3W}*\bar\rho)(t,x) \right) \dx
			\spliteq\qquad +F(t)G\bigl(2S(t)\bigr) \bigl[\bigl(q_i(t)+q_{i+1}(t)\bigr)
			+\bigl(\rho_i(t)+\rho_{i+1}(t)\bigr) \bigl(x_{i+1}(t)-x_i(t)\bigr)\bigr]
			\spliteq\qquad +2F(t)G\bigl(2S(t)\bigr)\abs{\rho_{i+1}-\rho_i} \biggr\}
			\spliteq +\bigl[\norm{v}_\infty+R(t)G\bigl(R(t)\bigr)\bigr]
			F(t)\bigl[G\bigl(S(t)\bigr)+G\bigl(2S(t)\bigr)q(0)Q(t)+R(t)\bigr]
			\sum_{i=1}^N \abs{\rho_{i+1}-\rho_i}
			\spliteq +\norm{v}_\infty R(t) F(t)\bigl[G\bigl(S(t)\bigr)+G\bigl(2S(t)\bigr)q(0)Q(t)+R(t)\bigr] \\
			&\leq \norm{v}_\infty R(t) \left(\norm{\de_x^2V}_{L^1([-S(t),S(t)])}
			+ \norm{\abs{\de_x^3W}*\bar\rho}_{L^1([-S(t),S(t)])}\right)
			\spliteq +2\norm{v}_\infty R(t) F(t) G\bigl(2S(t)\bigr) \sum_{i=1}^N \left[
			q_i(t) + R(t)(x_i-x_{i-1}) + \abs{\rho_{i+1}-\rho_i}\right]
			\spliteq +\bigl[\norm{v}_\infty+R(t)G\bigl(R(t)\bigr)\bigr]
			F(t)\bigl[G\bigl(S(t)\bigr)+G\bigl(2S(t)\bigr)q(0)Q(t)+R(t)\bigr]
			\sum_{i=1}^N \abs{\rho_{i+1}-\rho_i}
			\spliteq +\norm{v}_\infty R(t) F(t)\bigl[G\bigl(S(t)\bigr)+G\bigl(2S(t)\bigr)q(0)Q(t)+R(t)\bigr] \\
			&\leq \norm{v}_\infty R(t) F(t)\bigl[G\bigl(S(t)\bigr)+G\bigl(2S(t)\bigr)q(0)Q(t)\bigr]
			\spliteq +2\norm{v}_\infty R(t) F(t) G\bigl(2S(t)\bigr) \bigl[q(0)Q(t) + R(t)S(t)\bigr]
			\spliteq +4\bigl[\norm{v}_\infty+R(t)G\bigl(R(t)\bigr)\bigr] R(t)
			F(t)\bigl[G\bigl(S(t)\bigr)+G\bigl(2S(t)\bigr)q(0)Q(t)\bigr] \TV(\bar\rho)
		\end{split}
	\]
	On the other hand, by, assumption \ref{as:f}, we have
	\[
		\begin{split}
			\Sigma_\mathrm{src}
			&\leq \sum_{i=1}^{N-1} \abs*{\dashint_{x_i}^{x_{i+1}} f(t,x,\rho_{i+1})\dx
			-\dashint_{x_{i-1}}^{x_i} f(t,x,\rho_i)\dx}
			\spliteq
			+\abs*{\dashint_{x_0}^{x_1} f(t,x,\rho_1)\dx}
			+\abs*{\dashint_{x_{N-1}}^{x_N} f(t,x,\rho_N)\dx} \\
			&\leq \sum_{i=1}^{N-1}
			\dashint_{x_i}^{x_{i+1}} \abs[\big]{f(t,x,\rho_{i+1}) - f(t,x,\rho_i)}\dx
			\spliteq +\sum_{i=1}^{N-1}
			\abs*{\dashint_{x_i}^{x_{i+1}} f(t,x,\rho_i)\dx - \dashint_{x_{i-1}}^{x_i} f(t,x,\rho_i)\dx}
			\spliteq +\abs*{\dashint_{x_0}^{x_1} f(t,x,\rho_1)\dx}
			+\abs*{\dashint_{x_{N-1}}^{x_N} f(t,x,\rho_N)\dx} \\
			&\leq \sum_{i=1}^{N-1} \norm{\de_\rho f}_{L^\infty([0,R(t)])}\abs{\rho_{i+1}-\rho_i}
			+c_fF(t)(\rho_1+\rho_N)
			\spliteq +\sum_{i=1}^{N-1} \int_0^1
			\abs*{ f\bigl(t,(1-s)x_i+sx_{i+1},\rho_i\bigr) - f\bigl(t,(1-s)x_{i-1}+sx_i,\rho_i\bigr) } \ds \\
			&\leq F(t)G\bigl(R(t)\bigr) \TV(\bar\rho) + 2c_fF(t)R(t)
			+\sum_{i=1}^{N-1} \abs{D_x f(t,\plchldr,\rho_i)}([x_{i-1},x_{i+1}])\ds \\
			&\leq F(t)G\bigl(R(t)\bigr) \TV(\bar\rho) + 2c_fF(t)R(t)
			+2\eta_{t,R(t)}([-S(t),S(t)]).
		\end{split}
	\]
	Combining the estimates for $\Sigma_\mathrm{adv}$ and $\Sigma_\mathrm{src}$ we obtain that
	$\frac{\d}{\dt}\TV(\bar\rho) \leq \alpha(t) + \beta(t)\TV(\bar\rho)$ for some positive increasing
	functions $\alpha,\beta$ independent of $N$, hence the conclusion is reached by applying Grönwall's
	inequality.
\end{proof}

\subsection{Compactness}\label{sub:compactness}

This subsection is devoted to showing that the numerical approximations $\bar\rho^N$ converge (up to
subsequence) to a limit density $\rho$ in the strong topology of
$L^1_\loc\bigl([0,\infty)\times\setR\bigr)$. The main tool to prove the compactness of $(\bar\rho^N)_N$
is a weaker version of \cite[Theorem~2]{Rossi-Savare-2003}.

\renewcommand{\tmpcite}{\cite[Theorem~2]{Rossi-Savare-2003}}
\begin{theorem}[Weaker version of \tmpcite]\label{thm:rossi-savare}
	Let $X$ be a separable Banach space, let $\G:X\to[0,\infty]$ be a lower semi-continuous coercive
	functional, let $g:X\times X\to[0,\infty]$ be a lower semi-continuous generalized\,\footnote{The term
		``generalized'' refers to the fact that the function $g$ can assume the value $\infty$.}\! distance and
	let $\mathcal U$ be a family of measurable functions $u:(0,T)\to X$. Assume that
	\begin{subequations}
		\begin{equation}\label{eq:rossi-savare-hyp1}
			\sup_{u\in\mathcal{U}} \int_0^T \G\bigl(u(t)\bigr) \dt < \infty
		\end{equation}
		and
		\begin{equation}\label{eq:rossi-savare-hyp2}
			\lim_{h\to0} \sup_{u\in\mathcal{U}} \int_0^{T-h} g\bigl(u(t),u(t+h)\bigr) \dt = 0.
		\end{equation}
	\end{subequations}
	Then $\mathcal{U}$ contains a sequence $(u_n)_{n\in\setN}$ which converges in measure to a limit function
	$u:(0,T)\to X$, i.e.\ for every $\eps>0$ one has
	\[
		\lim_{n\to\infty} \leb^1\bigl(\set{t\in(0,T)}{\norm{u_n(t)-u(t)}_X > \eps}\bigr) = 0.
	\]
\end{theorem}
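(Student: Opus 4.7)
The approach follows the classical Rossi--Savaré strategy: combine the coercivity of $\G$, which yields pointwise-in-time precompactness in $X$ on sets of large measure, with the integral time-equicontinuity in the pseudo-distance $g$ encoded by \eqref{eq:rossi-savare-hyp2}, so as to glue pointwise convergences into convergence in measure.

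First, I would exploit \eqref{eq:rossi-savare-hyp1} via Chebyshev's inequality: for every $\delta>0$ there is a constant $C_\delta$ such that, for each $u\in\U$, the set $\{t\in(0,T):\G(u(t))>C_\delta\}$ has measure at most $\delta$. Coercivity of $\G$ then gives that the sublevel set $K_\delta=\{v\in X:\G(v)\leq C_\delta\}$ is precompact in $X$, so off a small set $u(t)$ lies in $K_\delta$. Fixing a countable dense subset $\{t_k\}_{k\in\setN}\subseteq(0,T)$ and using a standard diagonal argument based on the precompactness of $K_\delta$, I would extract a subsequence $(u_n)\subseteq\U$ and limits $u_k^\star\in X$ such that $u_n(t_k)\to u_k^\star$ in $X$ for every $k$.

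Next, I would use \eqref{eq:rossi-savare-hyp2} to propagate the convergence from the countable set to almost every time, in the sense of measure. Given $\eps>0$, Chebyshev applied to the integral in \eqref{eq:rossi-savare-hyp2} yields, for $h$ small enough uniformly in $n$, a set $A_n^h\subseteq(0,T-h)$ of small measure outside of which $g(u_n(t),u_n(t+h))\leq\eps$. Approximating $u_n$ by its values on a fine mesh of the $t_k$'s, combining with the $X$-convergence at mesh points, and invoking the lower semi-continuity of $g$, I would deduce that $(u_n)$ is Cauchy in measure for the norm topology of $X$; a standard fact then gives a subsequence converging in measure to a limit function $u:(0,T)\to X$.

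The main obstacle is precisely this last step, where integral $g$-smallness has to be upgraded to pointwise $\|\cdot\|_X$-smallness on a set of large measure. This rests on the interplay between the compactness of the sublevel sets $K_\delta$ and the lower semi-continuity of $g$: if two sequences $(v_n),(w_n)\subseteq K_\delta$ satisfy $g(v_n,w_n)\to0$, then any joint accumulation point $(v,w)\in K_\delta\times K_\delta$ must satisfy $g(v,w)=0$, and the generalized-distance structure of $g$ forces $v=w$, which in turn yields norm-smallness along a further subsequence. All the other ingredients (diagonal extraction, Chebyshev, lower semi-continuity) are standard.
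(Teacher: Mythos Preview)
The paper does not give its own proof of this statement: \autoref{thm:rossi-savare} is quoted as a weaker version of \cite[Theorem~2]{Rossi-Savare-2003} and is used as a black box in the compactness argument. So there is nothing to compare your sketch against on the paper's side.

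On the merits of your sketch, there is a genuine gap in the second paragraph. You write that, after fixing a countable dense set $\{t_k\}\subset(0,T)$, a standard diagonal argument based on the precompactness of $K_\delta$ yields a subsequence with $u_n(t_k)\to u_k^\star$ in $X$ for every $k$. But Chebyshev applied to \eqref{eq:rossi-savare-hyp1} only tells you that for each fixed $u_n$ the exceptional set $E_n^\delta=\{t:\G(u_n(t))>C_\delta\}$ has measure at most $\delta$; these sets depend on $n$, and for a prescribed $t_k$ there is no reason why $t_k$ should lie outside $E_n^\delta$ for all (or all but finitely many) $n$. Consequently you cannot conclude that $(u_n(t_k))_n$ sits in any fixed compact set, and the diagonal extraction at fixed times is not justified. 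A Borel--Cantelli attempt to make the exceptional measures summable in $n$ forces $C_\delta\to\infty$, so the corresponding sublevel sets lose compactness, and the argument collapses.

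The Rossi--Savaré proof avoids this by not anchoring the extraction to a time grid chosen in advance. One workable route is to partition $(0,T)$ into intervals $I_j$ of length $h$ and, for each $n$ and $j$, pick $s_j^n\in I_j$ with $\G\bigl(u_n(s_j^n)\bigr)$ below the average $M/h$; then $(u_n(s_j^n))_n$ does lie in a fixed compact set and a diagonal extraction goes through, after which \eqref{eq:rossi-savare-hyp2} together with the compactness/lower semi-continuity argument you outline in your last paragraph transfers the convergence from the moving sample points to convergence in measure. Your identification of that final step as the delicate one is correct, but the earlier extraction step needs this adjustment before the rest of the scheme can run.
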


We will apply this theorem to the Banach space $\bigl(\Meas(\setR),\norm{\plchldr}_1\bigr)$, where
$\Meas(\setR)$ denotes the set of (finite) signed Borel measures and $\norm{\mu}_1=\abs{\mu}(\setR)$ is
the total variation norm, and $\mathcal{U}=\set{t\mapsto\bar\rho^N(t,\plchldr)}{N\in\setN_+}$.

Condition \eqref{eq:rossi-savare-hyp2} is implied by the equicontinuiy of $\mathcal{U}$ with respect to
the distance $g$. In \cite{Radici-Stra-2023} we used the Wasserstein distance $g=W_1$ because the $\bar\rho^N$ were probability measures for every time and the $L^\infty$ bound of the velocities of the particles implied the equicontinuity in this topology.
In our current setting the source term causes the approximate solutions $\bar\rho^N$ to (possibly) have
mass different from $1$, and moreover its contribution is controlled in the $\norm{\plchldr}_1$ topology
and not the weaker $W_1$.
For this reason we are led to consider a generalized distance $\g$,
introduced in \autoref{def:g} and studied in \autoref{lem:g}, which is dominated by both
$\norm{\plchldr}_1$ and $W_1$.

\begin{definition}\label{def:g}
	Define $\V,\W,\g:\Meas(\setR)\times\Meas(\setR)\to[0,\infty]$ as
	\begin{align*}
		\V(\mu,\nu) & = \sup\set*{\abs*{\int_\setR f\d\mu - \int_\setR f\d\nu}}
		{f\in C_b(\setR),\ \norm{f}_\infty \leq 1},                             \\
		\W(\mu,\nu) & = \sup\set*{\abs*{\int_\setR f\d\mu - \int_\setR f\d\nu}}
		{f\in \Lip_b(\setR),\ \Lip(f) \leq 1},                                  \\
		\g(\mu,\nu) & = \inf\set*{\sum_{i=1}^n d_i(\sigma_{i-1},\sigma_i)}
		{\sigma_i\in\Meas(\setR),\ \sigma_0=\mu,\ \sigma_n=\nu,\ d_i\in\{\mathcal{V},\mathcal{W}\}}.
	\end{align*}
\end{definition}

It is well known that $\V(\mu,\nu)=\norm{\mu-\nu}_1$ for all $\mu,\nu\in\Meas(\setR)$; on the other hand, by
the duality formula of optimal transport, when $\mu,\nu\in\Meas_+(\setR)$ are non-negative and
$\mu(\setR)=\nu(\setR)$, then $\W$ coincides with the Wasserstein distance $W_1$
\[
	\W(\mu,\nu) = W_1(\mu,\nu)
	= \inf\set*{\int_{\setR\times\setR} \abs{y-x}\d\pi(x,y)}
	{\pi\in\Meas_+(\setR),\ \proj^1_\#\pi=\mu,\ \proj^2_\#\pi=\nu}.
\]

\begin{lemma}\label{lem:g}
	The function $\g$ defined in \autoref{def:g} is a continuous generalized distance on $\Meas(\setR)$.
	Moreover,
	\begin{itemize}
		\item $\g$ locally metrizes the weak convergence of measures $\Meas(\setR)$ in duality with $C_b(\setR)$,
		      i.e.\ if $(\mu_n)_{n\in\setN}\subset\Meas(\setR)$ and $\mu\in\Meas(\setR)$ are such that
		      $\sup_{n\in\setN}\norm{\mu_n}_1<\infty$ and $\lim_{n\to\infty}\g(\mu_n,\mu)=0$ then
		      $\mu_n\weakto\mu$ in duality with $C_b(\setR)$;
		\item $\g$ metrizes the weak convergence of non-negative measures $\Meas_+(\setR)$ in duality with
		      $C_b(\setR)$,
		      i.e.\ if $(\mu_n)_{n\in\setN}\subset\Meas_+(\setR)$ and $\mu\in\Meas_+(\setR)$ are such that
		      $\lim_{n\to\infty}\g(\mu_n,\mu)=0$ then $\mu_n\weakto\mu$ in duality with $C_b(\setR)$.
	\end{itemize}
\end{lemma}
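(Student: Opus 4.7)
First I would establish that $\g$ is a generalized distance. Non-negativity, symmetry, and $\g(\mu,\mu)=0$ are immediate, and the triangle inequality follows by concatenating two chains. The identity of indiscernibles is the key point: any $f\in C_b(\setR)\cap\Lip_b(\setR)$ with $\max(\norm{f}_\infty,\Lip(f))\leq 1$ is admissible both in the supremum defining $\V$ and in the one defining $\W$, so along any chain $\sigma_0=\mu,\dots,\sigma_n=\nu$ with $d_i\in\{\V,\W\}$ one has
\[
\abs*{\int f\,d\mu-\int f\,d\nu} \leq \sum_{i=1}^n d_i(\sigma_{i-1},\sigma_i).
\]
Taking the infimum yields $\abs{\int f\,d\mu-\int f\,d\nu}\leq\g(\mu,\nu)$; since $C_b\cap\Lip_b$ separates measures on $\setR$, this forces $\mu=\nu$ when $\g(\mu,\nu)=0$. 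Continuity (in fact Lipschitzianity with respect to the total-variation norm) follows from $\g\leq\V=\norm{\plchldr-\plchldr}_1$ and the triangle inequality.

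For the forward implication with signed measures of uniformly bounded total variation, by homogeneity the estimate above upgrades to
\[
\abs*{\int f\,d\mu_n-\int f\,d\mu} \leq \max(\norm{f}_\infty,\Lip(f))\,\g(\mu_n,\mu) \to 0 \qquad \forall f\in C_b\cap\Lip_b,
\]
and in particular $\mu_n(\setR)\to\mu(\setR)$ by testing with $f\equiv 1$. To extend the convergence to arbitrary $f\in C_b(\setR)$, I would introduce a $1$-Lipschitz cutoff $\chi_R\in C_b(\setR)$ equal to $1$ on $[-R,R]$ and vanishing outside $[-R-1,R+1]$; the compactly supported piece $f\chi_R$ is approximated uniformly by Lipschitz functions via mollification, while the tail piece $f(1-\chi_R)$ is handled by extracting a vaguely convergent subsequence of $(\mu_n)$ via Banach--Alaoglu (in duality with $C_0$), identifying its limit with $\mu$ by the previous convergence on $C_c\cap\Lip$, and then promoting vague convergence to narrow convergence using the convergence of total masses and the uniform bound on $\norm{\mu_n}_1$.

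For the backward direction with non-negative measures, assume $\mu_n\weakto\mu$ narrowly. Prokhorov's theorem provides tightness, so given $\epsilon>0$ I pick a compact $K\subset\setR$ with $\mu_n(\setR\setminus K),\mu(\setR\setminus K)<\epsilon$ and a cutoff $\chi\in C_c(\setR)$ with $0\leq\chi\leq 1$ and $\chi\equiv 1$ on $K$. Setting $\alpha_n=\int\chi\,d\mu\,/\int\chi\,d\mu_n\to 1$ (the case $\mu=0$ is trivial), I consider the chain
\[
\mu_n \;\to\; \chi\mu_n \;\to\; \alpha_n\chi\mu_n \;\to\; \chi\mu \;\to\; \mu,
\]
whose successive costs under $\V,\V,\W,\V$ are bounded respectively by $\epsilon$, by $\abs{\alpha_n-1}\int\chi\,d\mu_n=o(1)$, by $W_1(\alpha_n\chi\mu_n,\chi\mu)$ (two equal-mass measures with supports in $\supp\chi$, so this vanishes as $n\to\infty$ since $W_1$ metrizes narrow convergence of equal-mass measures on compact sets), and by $\epsilon$. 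Passing first $n\to\infty$ and then $\epsilon\to 0$ yields $\g(\mu_n,\mu)\to 0$.

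The main technical challenge lies in the extension step of the forward direction for signed measures: Prokhorov's tightness is unavailable, and promoting vague convergence to narrow convergence requires careful use of the uniform total-variation bound together with the quantitative tail control that $\g$ furnishes via Lipschitz cutoffs. The remaining steps are routine once the generalized-distance axioms and the chain construction in paragraph three are in place.
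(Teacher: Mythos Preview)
Your treatment of the generalized-distance axioms and of continuity with respect to the total-variation norm matches the paper's and is fine.

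There are, however, two genuine problems. First, you have misread the second bullet: it is not a converse implication. Both bullets assert the same forward direction ($\g$-convergence $\Rightarrow$ weak convergence against $C_b$); the second merely replaces the hypothesis $\sup_n\norm{\mu_n}_1<\infty$ by non-negativity. The paper's proof of the second bullet is a one-line reduction to the first: for $\mu_n\in\Meas_+(\setR)$ one has $\norm{\mu_n}_1=\int 1\,d\mu_n\to\int 1\,d\mu=\norm{\mu}_1$ since the constant $1$ lies in $\Lip_b(\setR)$, so the uniform total-variation bound is automatic. Your Prokhorov-based chain construction correctly proves the (unasked) converse for non-negative measures, but it does not address what the lemma actually claims.

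Second, in your argument for the first bullet the step ``promoting vague convergence to narrow convergence using the convergence of total masses and the uniform bound on $\norm{\mu_n}_1$'' fails for signed measures: vague convergence together with $\mu_n(\setR)\to\mu(\setR)$ does not imply narrow convergence in $\Meas(\setR)$ (e.g.\ $\mu_n=\delta_n-\delta_{-n}$). The paper takes a much shorter route, simply asserting that every $\phi\in C_b(\setR)$ admits $f\in\Lip_b(\setR)$ with $\norm{f-\phi}_\infty\leq\eps$ and then estimating $\bigl|\int\phi\,d(\mu_n-\mu)\bigr|\leq\max(\norm{f}_\infty,\Lip f)\,\g(\mu_n,\mu)+\eps(\norm{\mu_n}_1+\norm{\mu}_1)$. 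Note, though, that this uniform Lipschitz approximation is itself delicate for $\phi$ that are not uniformly continuous; in fact the first bullet as literally stated is problematic, since for $\mu_n=\delta_n-\delta_{n+1/n}$ one has $\sup_n\norm{\mu_n}_1=2$ and $\g(\mu_n,0)\leq\W(\mu_n,0)\leq 1/n\to 0$, yet $\int\sin(x^2)\,d\mu_n=\sin(n^2)-\sin\bigl(n^2+2+n^{-2}\bigr)$ does not tend to $0$. This does not affect the paper's applications, which involve only non-negative densities.
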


\begin{proof}
	First of all notice that $\V$ and $\W$ are symmetric and non-negative, hence $\g$ inherits these same
	properties. Moreover $\g(\mu,\mu)=0$ by definition and the triangle inequality for $\g$ is
	an immediate consequence of the non-negativity of $\V$ and $\W$. Let us now prove that $\g(\mu,\nu)=0$
	implies $\mu=\nu$.
	By definition of $\V$ and $\W$, for every $f\in\Lip_b(\setR)$ we have
	\begin{align*}
		\abs*{\int f\d\mu - \int f\d\nu} & \leq \norm{f}_\infty \V(\mu,\nu), &
		\abs*{\int f\d\mu - \int f\d\nu} & \leq \Lip(f) \W(\mu,\nu),
	\end{align*}
	hence
	\[
		\abs*{\int f\d\mu - \int f\d\nu} \leq \max\{\norm{f}_\infty,\Lip(f)\} \min\{\V(\mu,\nu),\W(\mu,\nu)\}.
	\]
	By definition of $\g$	we can then compute
	\[
		\begin{split}
			\abs*{\int f\d\mu - \int f\d\nu}
			&= \abs*{\sum_{i=1}^n \left(\int f\d\sigma_i - \int f\d\sigma_{i-1}\right)}
			\leq \sum_{i=1}^n \abs*{\int f\d\sigma_i - \int f\d\sigma_{i-1}} \\
			&\leq \sum_{i=1}^n \max\{\norm{f}_\infty,\Lip(f)\} d_i(\sigma_{i-1},\sigma_i) \\
			&= \max\{\norm{f}_\infty,\Lip(f)\} \sum_{i=1}^n d_i(\sigma_{i-1},\sigma_i).
		\end{split}
	\]
	Taking the infimum yields
	\[
		\abs*{\int f\d\mu - \int f\d\nu} \leq \max\{\norm{f}_\infty,\Lip(f)\} \g(\mu,\nu).
	\]
	If $\mu\neq\nu$, then there exists $f\in C^\infty_c(\setR) \subset \Lip_b(\setR)$ such that
	$\int f\d\mu \neq \int f\d\nu$, therefore $\g(\mu,\nu)>0$.

	The continuity of $\g$ with respect to the strong topology of $\Meas(\setR)$ follows from the triangle
	inequality and the fact that $\g(\mu,\nu) \leq \V(\mu,\nu)=\norm{\mu-\nu}_1$.

	Let us now prove that the convergence with respect to $\g$ implies the weak convergence.
	Let $(\mu_n)_{n\in\setN}$ be a sequence such that $\g(\mu_n,\mu)\to0$ and $\sup_n\norm{\mu_n}_1<\infty$.
	Fix $\phi\in C_b(\setR)$.
	Given $\eps>0$ there is $f\in\Lip_b(\setR)$ such that $\norm{f-\phi}_\infty\leq\eps$.
	With this we can compute
	\[
		\begin{split}
			\abs*{\int \phi \d\mu_n - \int \phi \d\mu}
			&\leq
			\abs*{\int f \d\mu_n - \int f \d\mu} + \norm{f-\phi}_\infty (\norm{\mu_n}_1+\norm{\mu}_1) \\
			&\leq \max\{\norm{f}_\infty,\Lip(f)\} \g(\mu_n,\mu) + \eps (\norm{\mu_n}_1+\norm{\mu}_1),
		\end{split}
	\]
	therefore
	\[
		\limsup_{n\to\infty} \abs*{\int \phi \d\mu_n - \int \phi \d\mu}
		\leq \eps \left(\sup_n\norm{\mu_n}_1+\norm{\mu}_1\right)
	\]
	and from the arbitrariness of $\eps>0$ we conclude $\int f\d\mu_n\to\int f\d\mu$.

	In the case of non-negative measures $\mu_n,\mu\in\Meas_+(\setR)$, we have that $\norm{\mu_n}_1=\int
		1\d\mu_n \to \int1\d\mu=\norm{\mu}_1$ because the constant function $1$ is Lipschitz and bounded, so
	the hypothesis of the uniform bound of the masses is verified and we can apply the previous result.
\end{proof}

With the generalized distance $\g$ at our disposal, we are now in position to prove the compactness of
the numerical approximations.

\begin{proposition}[Compactness]\label{prop:compactness}
	Let $v$, $V$, $W$, $f$ satisfy the \autoref{ass}.
	For every $N\in\setN_+$ let $\bar\rho^N$ be the piecewise constant density
	associated to particles $(x_0,\dots,x_N)$ and masses $(q_1,\dots,q_N)$ solving \eqref{eq:ode},
	with initial condition given by \autoref{lem:initial-datum}.

	Then there exists a non-negative function $\rho\in L^\infty_\loc([0,\infty)\times\setR)$ such that
	up to a subsequence $\bar\rho^N$ converges to $\rho$ strongly in $L^p([0,T]\times\setR)$ for every $T>0$
	and every $p\in[1,\infty)$.

	Moreover $\rho$ enjoys the estimates
	\begin{align*}
		\norm{\rho(t,\plchldr)}_{L^1(\setR)} & \leq \norm{\rho_0}_1 Q(t), &
		\rho(t,\plchldr)                     & \leq R(t),                   \\
		\supp \rho(t,\plchldr)               & \subseteq [-S(t),S(t)],    &
		\TV\bigl(\rho(t,\plchldr)\bigr)      & \leq B(t),
	\end{align*}
	for every $t\in[0,\infty)$ for some increasing functions $Q,R,S,B:[0,\infty)\to[0,\infty)$.
\end{proposition}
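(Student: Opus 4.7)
The plan is to apply \autoref{thm:rossi-savare} in the separable Banach space $X=(\Meas(\setR),\norm{\plchldr}_1)$ to the family $\U=\set{t\mapsto\bar\rho^N(t,\plchldr)}{N\in\setN_+}$ with the generalized distance $g=\g$ introduced in \autoref{def:g}, after which I will upgrade the resulting convergence in measure to strong $L^p$ convergence and check that the limit $\rho$ inherits the claimed bounds.

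Fix $T>0$ and take as coercive functional $\G(\mu)=\TV(\mu)$ when $\mu\ll\leb^1$ with $\supp\mu\subseteq[-S(T),S(T)]$, and $\G(\mu)=+\infty$ otherwise. Helly's selection theorem ensures that the sublevel sets of $\G$ are relatively compact in $L^1(\setR)$, and lower semi-continuity is standard. Combining \autoref{prop:bound-support} and \autoref{prop:bound-TV} gives $\int_0^T\G\bigl(\bar\rho^N(t,\plchldr)\bigr)\dt\leq T\cdot B(T)$, which verifies \eqref{eq:rossi-savare-hyp1} uniformly in $N$.

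The main technical step is the equicontinuity condition \eqref{eq:rossi-savare-hyp2}, and this is where the mixed nature of $\g$ becomes essential: neither $W_1$ alone (because the total mass is not preserved when $f\neq0$) nor $\norm{\plchldr}_1$ alone (because a mere translation of a piecewise constant density has $O(1)$ cost in $\norm{\plchldr}_1$) would suffice. For a small step $h$ I would introduce the intermediate measure $\tilde\rho^N_{t,h}$ built with the particle positions $x_i(t+h)$ but the original masses $q_i(t)$: since it shares the total mass of $\bar\rho^N(t)$, the bound $\g(\bar\rho^N(t),\tilde\rho^N_{t,h})\leq W_1(\bar\rho^N(t),\tilde\rho^N_{t,h})$ is controlled by $\max_i\abs{x_i(t+h)-x_i(t)}\cdot q(t)=O(h)$ via \eqref{eq:ode-x}, \ref{as:v}, \autoref{prop:bound-mass} and \autoref{rmk:diff-U-constants}; while $\g(\tilde\rho^N_{t,h},\bar\rho^N(t+h))\leq\norm{\tilde\rho^N_{t,h}-\bar\rho^N(t+h)}_1\leq\sum_i\abs{q_i(t+h)-q_i(t)}=O(h)$ by \eqref{eq:ode-q} and \ref{as:f}, all with constants uniform in $N$. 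Summing and integrating in $t$ gives $\int_0^{T-h}\g(\bar\rho^N(t),\bar\rho^N(t+h))\dt\leq C(T)h\to0$, which is the hardest estimate and the whole motivation for introducing $\g$.

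\autoref{thm:rossi-savare} then produces a subsequence $\bar\rho^{N_k}$ converging in measure on $(0,T)$, with respect to $\norm{\plchldr}_1$, to some $\rho:(0,T)\to L^1(\setR)$. The uniform bound $\norm{\bar\rho^{N_k}(t)-\rho(t)}_1\leq 2\norm{\rho_0}_1 Q(T)$ from \autoref{prop:bound-mass} allows dominated convergence to upgrade this to convergence in $L^1((0,T);L^1(\setR))=L^1([0,T]\times\setR)$; a diagonal extraction in $T\to\infty$ produces a single subsequence valid for every $T>0$. The uniform $L^\infty$ bound $\bar\rho^{N_k}\leq R(T)$ from \autoref{prop:bound-density} together with the interpolation $\int\abs{u_n-u}^p\leq\norm{u_n-u}_\infty^{p-1}\int\abs{u_n-u}$ yield strong $L^p$ convergence for every $p\in[1,\infty)$. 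Finally, the four pointwise bounds on $\rho$ transfer from the approximations via lower semi-continuity (for $\TV$, $\norm{\plchldr}_\infty$ and $\norm{\plchldr}_1$) and by the fact that support containment is preserved under $L^1$ convergence.
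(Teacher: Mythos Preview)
Your proposal is correct and follows essentially the same route as the paper. In particular, your intermediate measure $\tilde\rho^N_{t,h}$ is precisely the pushforward $\Xi^N_\#\bar\rho^N(t)$ that the paper constructs via the piecewise-affine map $\Xi^N$, and the splitting of $\g$ into a $W_1$-leg (controlled by particle displacement) plus a $\norm{\plchldr}_1$-leg (controlled by mass variation) is identical; the only cosmetic differences are your choice of $\G$ (encoding the support constraint as a hard cutoff rather than an additive term) and your appeal to dominated convergence directly from convergence in measure rather than first extracting an a.e.\ subsequence.
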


\begin{proof}
	Let $(X,\norm{\plchldr}_X) = (\Meas(\setR),\norm{\plchldr}_1)$ and $\U = \set{\bar\rho^N}{N\in\setN_+}$.
	Consider the functional $\G:X\to[0,\infty]$
	\[
		\G(\rho) = \sup_{x\in\supp(\rho)} \abs{x} + \norm{\rho}_{BV}
		= \sup_{x\in\supp(\rho)} \abs{x} + \norm{\rho}_1 + \TV(\rho).
	\]
	$\G$ is lower semi-continuous and coercive; moreover by \autoref{prop:bound-support},
	\autoref{prop:bound-mass} and \autoref{prop:bound-TV} we have that \eqref{eq:rossi-savare-hyp1}
	is satisfied.

	In order to verify \eqref{eq:rossi-savare-hyp2} with $g=\g$, we now prove that the functions
	$t\mapsto\bar\rho^N(t,\plchldr)$ are equi-continuous with respect to $\g$.

	Let $\Xi^N:\setR\to\setR$ be the increasing piecewise-affine function that maps every interval $[x_{i-1}(t),x_i(t)]$ to $[x_{i-1}(t+h),x_i(t+h)]$:
	\[
		\Xi^N(x) = \sum_{i=1}^N \left(
		\frac{x-x^N_{i-1}(t)}{x^N_i(t)-x^N_{i-1}(t)} x^N_i(t+h)
		+\frac{x^N_i(t)-x}{x^N_i(t)-x^N_{i-1}(t)} x^N_{i-1}(t+h)
		\right) \bm1_{[x^N_{i-1}(t),x^N_i(t)]}(x).
	\]
	By construction we have that
	\[
		\abs{\Xi^N(x)-x} \leq \abs{x_{i-1}(t+h)-x_{i-1}(t)} + \abs{x_i(t+h)-x_i(t)}
		\qquad\forall x\in[x_{i-1}(t),x_i(t)],
	\]
	therefore
	\[
		\begin{split}
			W_1\bigl(\bar\rho^N(t),\Xi^N_\#\bar\rho^N(t)\bigr)
			&\leq \int_\setR \abs{\Xi^N(x)-x} \d\bar\rho^N(x) \\
			&\leq \sum_{i=1}^N \int_{x_{i-1}(t)}^{x_i(t)} \abs{\Xi^N(x)-x} \d\bar\rho^N(x) \\
			&\leq \sum_{i=1}^N q_i(t) \bigl( \abs{x_{i-1}(t+h)-x_{i-1}(t)} + \abs{x_i(t+h)-x_i(t)} \bigr) \\
			&\leq 2 q(0)Q(t) \left(\max_{i\in\{1,\dots,N\}} \int_t^{t+h} \abs{x_i'(\tau)} \d\tau\right).
		\end{split}
	\]
	Notice that by \eqref{eq:ode-x}, \eqref{eq:ode-U}, and \ref{as:v}--\ref{as:W}
	for every $\tau\in(t,t+h)$ and $i\in\{1,\dots,N\}$ we have
	\[
		\begin{split}
			\abs{x_i'(\tau)}
			&\leq \norm{v}_\infty \bigl( \norm{V}_{L^\infty(\supp\bar\rho^N)}
			+ \norm{\de_xW*\bar\rho^N}_{L^\infty(\supp\bar\rho^N)} \bigr) \\
			&\leq \norm{v}_\infty \bigl[ F(t+h)G\bigl(S(t+h)\bigr)
				+ q(0)Q(t+h) F(t+h)G\bigl(2S(t+h)\bigr) \bigr],
		\end{split}
	\]
	therefore
	\begin{equation}\label{eq:equicont-adv}
		\begin{split}
			&W_1\bigl(\bar\rho^N(t),\Xi^N_\#\bar\rho^N(t)\bigr) \\
			&\leq 2 \norm{v}_\infty q(0)Q(t)F(t+h)
			\bigl[ G\bigl(S(t+h)\bigr) + q(0)Q(t+h) G\bigl(2S(t+h)\bigr) \bigr]
			h.
		\end{split}
	\end{equation}
	On the other hand
	\[
		\begin{split}
			\bar\rho^N(t+h) &= \sum_{i=1}^N \frac{q_i(t+h)}{x_i(t+h)-x_{i-1}(t+h)} \bm1_{(x_{i-1}(t+h),x_i(t+h))} \\
			\Xi^N_\#\bar\rho^N(t) &= \sum_{i=1}^N \frac{q_i(t)}{x_i(t+h)-x_{i-1}(t+h)} \bm1_{(x_{i-1}(t+h),x_i(t+h))}
		\end{split}
	\]
	hence by \eqref{eq:ode-q} and \ref{as:f}
	\begin{equation}\label{eq:equicont-source}
		\begin{split}
			\norm[\big]{\Xi^N_\#\bar\rho^N(t)-\bar\rho^N(t+h)}_1
			&= \sum_{i=1}^n \abs{q_i(t+h)-q_i(t)}
			\leq \sum_{i=1}^n \int_t^{t+h} \abs{q_i'(\tau)} \d\tau \\
			&\leq \sum_{i=1}^n \int_t^{t+h} c_f F(\tau) q_i(\tau) \d\tau
			\leq c_f F(t+h) q(0)Q(t+h) h.
		\end{split}
	\end{equation}
	Combining \eqref{eq:equicont-adv} and \eqref{eq:equicont-source}, for $t\in(0,T-h)$ we can finally  get the uniform estimate
	\[
		\begin{split}
			&\g\bigl(\bar\rho^N(t),\bar\rho^N(t+h)\bigr) \\
			&\leq W_1\bigl(\bar\rho^N(t),\Xi^N_\#\bar\rho^N(t)\bigr)
			+ \norm[\big]{\Xi^N_\#\bar\rho^N(t)-\bar\rho^N(t+h)}_1 \\
			&\leq q(0)Q(T)F(T) \bigl\{2 \norm{v}_\infty
			\bigl[ G\bigl(S(T)\bigr) + q(0)Q(T) G\bigl(2S(T)\bigr) \bigr]
			+ c_f \bigr\} h,
		\end{split}
	\]
	from which we deduce the validity of \eqref{eq:rossi-savare-hyp2}.
	We can now apply \autoref{thm:rossi-savare} and obtain that there exists a curve of measures
	$\rho:[0,\infty)\to\Meas(\setR)$ such that (up to a subsequence)
	$(t\mapsto\bar\rho^N(t,\plchldr))_{N\in\setN_+}$ converges in measure and for almost every time
	to $t\mapsto\rho(t,\plchldr)$.

	Since in the target space $\Meas(\setR)$ the topology considered is the one of strong convergence of
	measures, the limit $\rho$ is non-negative and all the bounds provided by \autoref{prop:bound-mass},
	\autoref{prop:bound-support}, \autoref{prop:bound-density} and \autoref{prop:bound-TV} pass to the limit,
	thus providing the estimates claimed in the statement.

	From the convergence for almost every time and the domination
	$\norm{\bar\rho^N}_1 \leq \sup_N q^N(0)Q(t)$ we deduce the convergence in
	$L^1\bigl([0,T];L^1(\setR)\bigr)=L^1\bigl([0,T]\times\setR\bigr)$.
	In addition, since $\bar\rho^N,\rho\leq R(T)$, for every $p\in[1,\infty)$ we have that
	\[
		\begin{split}
			\norm{\bar\rho^N-\rho}_{L^p([0,T]\times\setR)}^p
			&\leq \norm{\bar\rho^N-\rho}_{L^\infty([0,T]\times\setR)}^{p-1}
			\norm{\bar\rho^N-\rho}_{L^1([0,T]\times\setR)} \\
			&\leq [2R(T)]^{p-1} \norm{\bar\rho^N-\rho}_{L^1([0,T]\times\setR)},
		\end{split}
	\]
	hence there is also strong convergence in $L^p\bigl([0,T]\times\setR\bigr)$.
	Similarly, from the convergence in $L^1(\setR)$ for almost every time and the bound in $L^\infty(\setR)$
	we get that for almost every $t\in[0,T]$ we have $\bar\rho^N(t,\plchldr)\to\rho(t,\plchldr)$
	in $L^p(\setR)$.
\end{proof}

As a consequence of the previous theorem, one can prove the next corollary by following the argument used
for \cite[Corollary~2.12]{Radici-Stra-2023} with minimal modifications (in particular the fact that
$\rho$ is weakly continuous in time because $\bar\rho^N$ are equi-Lipschitz with respect to $\g$).

\begin{corollary}\label{cor:continuity}
	With the assumptions of \autoref{prop:compactness}, we have that $\rho\in C\bigl([0,\infty);L^p(\setR)\bigr)$ for every $p\in[1,\infty)$.
\end{corollary}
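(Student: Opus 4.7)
The plan is to upgrade the almost-everywhere convergence and uniform bounds established in \autoref{prop:compactness} to continuity in time with values in $L^p(\setR)$, exploiting the equi-Lipschitz regularity in the generalized distance $\g$ together with the $BV$ bound.

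The first step is to observe that the estimate at the end of the proof of \autoref{prop:compactness} shows
\[
	\g\bigl(\bar\rho^N(t),\bar\rho^N(s)\bigr) \leq L(T)\abs{t-s}, \qquad \forall\, t,s\in[0,T],
\]
with a constant $L(T)$ that is independent of $N$. In particular the family $(\bar\rho^N)_N$ is equi-Lipschitz as maps from $[0,T]$ to $(\Meas(\setR),\g)$, and the almost-everywhere limit $\rho$ inherits this property after possibly modifying it on a null set of times. By the domination $\g\leq\V=\norm{\plchldr}_1$ and \autoref{lem:g}, this means that $t\mapsto\rho(t,\plchldr)$ is continuous from $[0,\infty)$ into $(\Meas(\setR),\g)$, and in particular weakly continuous in duality with $C_b(\setR)$.

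The second step promotes the weak continuity to strong $L^1$ continuity using the $BV$ and $L^\infty$ bounds. Fix $t\in[0,\infty)$ and a sequence $t_n\to t$ in $[0,T]$. By \autoref{prop:compactness} we have $\rho(t_n,\plchldr)\leq R(T)$, $\supp\rho(t_n,\plchldr)\subseteq[-S(T),S(T)]$, and $\TV\bigl(\rho(t_n,\plchldr)\bigr)\leq B(T)$, so the sequence is equibounded in $BV(\setR)$ with common compact support. By Helly's selection theorem / the compact embedding $BV\hookrightarrow L^1$ on bounded intervals, every subsequence admits a further subsequence converging in $L^1(\setR)$ to some limit $\tilde\rho$; the weak continuity established above forces $\tilde\rho=\rho(t,\plchldr)$, so by Urysohn's subsequence principle the whole sequence converges, i.e.\ $\rho(t_n,\plchldr)\to\rho(t,\plchldr)$ in $L^1(\setR)$.

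Finally, the uniform bound $\rho(\plchldr,\plchldr)\leq R(T)$ on $[0,T]$ allows the interpolation
\[
	\norm{\rho(t_n,\plchldr)-\rho(t,\plchldr)}_{L^p(\setR)}^p
	\leq [2R(T)]^{p-1}\norm{\rho(t_n,\plchldr)-\rho(t,\plchldr)}_{L^1(\setR)},
\]
which gives continuity in $L^p(\setR)$ for every $p\in[1,\infty)$, concluding the proof.

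The main obstacle is the first step: one must carefully extract from the Lipschitz-in-$\g$ regularity of $\bar\rho^N$ a genuine pointwise-in-time representative of the limit $\rho$ that is continuous (not merely measurable), and verify that the $\g$-limit recovered in \autoref{prop:compactness} coincides with the pointwise values of this representative. Once weak continuity is in hand, the rest is a routine application of $BV$ compactness and interpolation against the $L^\infty$ bound.
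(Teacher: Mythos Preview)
Your argument is correct and matches the approach the paper indicates: the key input is precisely the equi-Lipschitz regularity of $\bar\rho^N$ with respect to $\g$, which yields weak continuity of $\rho$ in time, after which the uniform $BV$ bound plus Helly's theorem upgrades this to $L^1$ continuity and interpolation against the $L^\infty$ bound gives $L^p$. This is exactly the route sketched in the paper's one-line justification (referring to \cite[Corollary~2.12]{Radici-Stra-2023}), and your identification of the one delicate point---producing a genuine everywhere-defined $\g$-continuous representative of $\rho$---is apt and is handled by the $BV$ compactness combined with the fact that $\g$ separates points.
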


\subsection{Convergence to entropy solutions}\label{sub:entropy}

In the present subsection, we show in \autoref{prop:quasi-entropy-solution} that the numerical
approximations $\bar\rho^N$ are almost entropy solutions of the PDE \eqref{eq:pde} in a sense which is
detailed by \autoref{def:quasi-entropy-solution}, which is adapted from \cite{Marconi-Radici-Stra-2022}.
In \autoref{prop:entropy-solution} we show that in the limit $N\to\infty$ the quasi-entropy inequalities
converge thus providing that the limit $\rho$ obtained from \autoref{prop:compactness} is an entropy
solution according to \autoref{def:entropy-solution}. All the pieces are put together at the end of the
section where we finally prove the main \autoref{thm:main}.

\begin{definition}[Quasi-entropy solution]\label{def:quasi-entropy-solution}
	Let $v,V,W,f$ satisfy the \autoref{ass}.
	Let $\mu_{0,t},\mu_{1,t},\nu_{0,t},\nu_{1,t}\in L^1_\loc\bigl([0,\infty);\Meas(\setR)_+\bigr)$ be
	locally-finite non-negative Borel measures.

	We say that a non-negative function
	$\rho\in C\bigl([0,T);L^1_\loc(\setR)\bigr)\cap L^\infty_\loc\bigl([0,T);BV(\setR)\bigr)$
	is a $(\mu_0,\mu_1)$--quasi-entropy solution of \eqref{eq:pde} if, letting $U=V-\de_xW*\rho$, the
	following entropy inequality
	\begin{equation*}
		\begin{split}
			&\int_0^T\!\!\int_\setR \Bigl\{
			\abs{\rho-c}\de_t\phi + \sign(\rho-c)\bigl[\bigl(m(\rho)-m(c)\bigr)U(t,x)\de_x\phi
				-m(c)\de_xU(t,x)\phi + f(t,x,\rho)\phi
				\bigr]
			\Bigr\} \dx\dt \\
			&\geq -\int_0^T\!\!\int_\setR \abs{\phi}\d\mu_{0,t}(x)\dt
			-\int_0^T\!\!\int_\setR \abs{\de_x\phi}\d\mu_{1,t}(x)\dt
		\end{split}
	\end{equation*}
	holds for every constant $c\geq0$ and non-negative test function
	$\phi\in C^\infty_c\bigl([0,T)\times\setR;[0,\infty)\bigr)$.

	We say that $\rho$ is a $(\nu_0,\nu_1)$--quasi-weak solution of \eqref{eq:pde} if
	\begin{equation*}
		\begin{split}
			\abs*{\int_0^T\!\!\int_\setR \bigl(\rho\de_t\phi+m(\rho)U(t,x)\de_x\phi+f(t,x,\rho)\phi\bigr)\dx\dt}
			\leq
			\int_0^T\!\!\int_\setR \abs{\phi}\d\nu_{0,t}(x)\dt
			+\int_0^T\!\!\int_\setR \abs{\de_x\phi}\d\nu_{1,t}(x)\dt
		\end{split}
	\end{equation*}
	holds for every (signed) test function $\phi\in C^\infty_c\bigl([0,T)\times\setR\bigr)$.
\end{definition}

\begin{proposition}[Quasi-entropy solution]\label{prop:quasi-entropy-solution}
	Given $N\in\setN$, let $\bar\rho^N(t,x)$ the piecewise constant density associated to particles
	$(x_0,\dots,x_N)$ and masses $(q_1,\dots,q_N)$ solving the system of ODEs \eqref{eq:ode},
	starting from an initial condition chosen according to \autoref{lem:initial-datum}.

	Then $\bar\rho^N$ is a $(\mu_0^N,\mu_1^N)$--quasi-entropy and $(\nu_0^N,\nu_1^N)$--quasi-weak solution to
	\eqref{eq:pde} according to \autoref{def:quasi-entropy-solution}, with measures
	$\mu_{0,t}^N=\nu_{0,t}^N=0$ and $\mu_{1,t}^N$ and $\nu_{1,t}^N$ enjoying the estimates
	\[
		\mu_{1,t}^N(\setR) \leq \left(\max_{i\in\{1,\dots,N\}}q_i^N(0)\right)H(t),
		\qquad
		\nu_{1,t}^N(\setR) \leq 2\left(\max_{i\in\{1,\dots,N\}}q_i^N(0)\right)H(t),
	\]
	for some increasing function $H:[0,\infty)\to[0,\infty)$ independent of $N$.
\end{proposition}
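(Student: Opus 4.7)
The plan is to take a test function $\phi\in C^\infty_c([0,T)\times\setR;[0,\infty))$ and a constant $c\geq0$, plug $\bar\rho^N$ and $\bar U^N=V-\de_xW*\bar\rho^N$ into the expression appearing in the entropy formulation, and use the piecewise constant structure of $\bar\rho^N$ to decompose the resulting integrals as sums over the intervals $(x_{i-1}(t),x_i(t))$. On each such interval $\bar\rho^N(t,\plchldr)=\rho_i(t)$ is constant, so that $|\bar\rho^N-c|=|\rho_i-c|$ and $\sign(\bar\rho^N-c)=\sign(\rho_i-c)$ are constant in space. This allows me to integrate by parts in time, using the Leibniz rule for moving intervals: for any integrable $g$,
\[
\frac{\d}{\dt}\int_{x_{i-1}(t)}^{x_i(t)} g(t,x)\dx
= \int_{x_{i-1}(t)}^{x_i(t)} \de_tg\dx + g\bigl(t,x_i(t)\bigr)x_i'(t) - g\bigl(t,x_{i-1}(t)\bigr)x_{i-1}'(t).
\]
Applied to $g=|\rho_i(t)-c|\phi$, this produces a contribution from $\rho_i'(t)$ (split via \eqref{eq:ode-rho} into an advection and a source piece), plus boundary fluxes at $x_{i\pm1}(t)$ where the ODE \eqref{eq:ode-x} yields $x_i'=v_iU_i$. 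Summing in $i$ and rearranging, the boundary fluxes telescope into finite differences of the form $\sign(\rho_{i+1}-c)(v_i-v(c))U_i-\sign(\rho_i-c)(v_i-v(c))U_i$ evaluated at each particle $x_i(t)$.

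The decisive step is then to compare these telescoped fluxes with the flux term $\sign(\rho-c)(m(\rho)-m(c))U\de_x\phi$ appearing in the entropy inequality after spatial integration by parts. After rewriting the differences $m(\rho_{i+1})-m(\rho_i)=\rho_{i+1}v(\rho_{i+1})-\rho_iv(\rho_i)$ and using $\sign(\rho-c)(m(\rho)-m(c))=|m(\rho)-m(c)|$ when $v$ is monotone, the favorable sign of the residual at each interface is secured by the inequality \eqref{eq:good-v-c} of \autoref{lem:good-v}: indeed the boundary error on the interface $x_i$ is exactly of the form $\phi(t,x_i)(\sign(\rho_{i+1}-c)-\sign(\rho_i-c))(v_i-v(c))U_i$, which is $\leq0$. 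What remains is a collection of genuine error terms coming from (a) the replacement of $U(t,x)$ on the interval $(x_{i-1},x_i)$ by the endpoint values $U_{i-1},U_i$, and (b) the replacement of $\phi(t,x)$ on the same interval by its endpoint values; both such errors can be written by a Taylor/mean value argument as an integral of $\de_x\phi$ against a measure supported on the interval, of total mass $\lesssim q_i(t)$ times a bound on $U$ and $v$ controlled by the a priori estimates.

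Collecting these errors, I obtain a single defect measure $\mu_{1,t}^N$ sitting on $[-S(t),S(t)]$ whose total mass is bounded by $(\max_i q_i^N(t))$ times a combination of the uniform bounds $\|v\|_\infty$, $F(t)$, $Q(t)$, $R(t)$, $G(2S(t))$, $\|\de_x^2V\|_{L^1}$, $\|\de_x^3W\|_{L^1}$, etc.\ coming from \autoref{prop:bound-mass}--\autoref{prop:bound-TV} and \autoref{rmk:diff-U-constants}. Using $\max_i q_i^N(t)\leq(\max_i q_i^N(0))Q(t)$ from \autoref{prop:bound-mass} yields the stated bound with a suitable increasing function $H$. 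Crucially, because every error arises from a spatial mismatch on an interval, it naturally tests against $\de_x\phi$ and not $\phi$, which accounts for $\mu_{0,t}^N=0$. The source term $f(t,x,\rho_i)$ contributes exactly $\int f(t,x,\bar\rho^N)\phi\dx\dt$ on the right-hand side of \eqref{eq:ode-q}, so it produces no additional defect at the level of $\phi$.

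For the quasi-weak statement, I repeat the same computation without the $\sign$ and without the constant $c$: the time derivative of $\int\rho_i\phi\dx$ on each interval is handled identically via \eqref{eq:ode-rho}, but now without a need to invoke \eqref{eq:good-v-c}, so all boundary contributions become pure error terms of the same type as before, which again pair with $\de_x\phi$ and give $\nu_{0,t}^N=0$ and $\nu_{1,t}^N(\setR)\leq2(\max_i q_i^N(0))H(t)$ (the factor $2$ accounts for the absence of sign cancellation). The main obstacle I anticipate is the careful bookkeeping of the boundary terms and the identification of which differences can be absorbed by \autoref{lem:good-v} versus which must be bounded as errors; the passage from a pointwise error on an interval to a bound on a measure concentrated on that interval is the cleanest way to obtain a formulation that survives the limit $N\to\infty$ in \autoref{prop:entropy-solution}.
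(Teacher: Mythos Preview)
Your approach is essentially the paper's: integrate by parts in time on each moving interval, telescope the interface contributions, show the non-error part has the good sign via \eqref{eq:good-v-c}, and collect the spatial mismatches into a measure $\mu_{1,t}^N$ acting on $\de_x\phi$. Two points need correction. First, the identity $\sign(\rho-c)\bigl(m(\rho)-m(c)\bigr)=|m(\rho)-m(c)|$ is false: monotonicity of $v$ does not imply monotonicity of $m(\rho)=\rho v(\rho)$ (take $v(\rho)=1-\rho$). Fortunately you do not need it; after summation by parts the interface term is already $-c\int_0^T\sum_i[\sign(\rho_{i+1}-c)-\sign(\rho_i-c)]\bigl(v_i-v(c)\bigr)U_i\phi(x_i)\dt$, to which \eqref{eq:good-v-c} applies directly. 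Second, the per-interval error is not simply ``$\lesssim q_i$ times a uniform bound'': the estimates for $|x_i'-x_{i-1}'|$ and $|v(\rho_i)U(t,x)-v_iU_i|$ produce terms $\norm{v'}_\infty\bigl(|\rho_i-\rho_{i-1}|+|\rho_{i+1}-\rho_i|\bigr)$ from the upwind choice in \eqref{eq:ode-v}, so after summing and extracting $\max_i q_i$ the remaining factor is $\sum_i(x_i-x_{i-1})+\sum_i|\rho_{i+1}-\rho_i|\leq 2S(t)+3B(t)$; this is precisely where the $\TV$ bound enters, and only the first-difference estimate \eqref{eq:first-diff-U} is used here, not $\norm{\de_x^2V}_{L^1}$ or $\norm{\de_x^3W}_{L^1}$ (those belong to \eqref{eq:second-diff-U}).

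For the quasi-weak statement the paper takes a shorter route than redoing the computation: it combines the quasi-entropy inequalities at $c=0$ and $c=R(T)$, which immediately gives $\nu_0^N=0$ and $\nu_1^N=2\mu_1^N$.
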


\begin{proof}
	Let $\bar\rho=\bar\rho^N$ to simplify notation.
	The left hand side of the quasi-entropy inequality is
	\begin{equation}\label{eq:quasi-entropy-lhs}
		\begin{split}
			&\int_0^T\!\!\int_\setR \Bigl\{
			\abs{\bar\rho-c}\de_t\phi + \sign(\bar\rho-c)\bigl[\bigl(m(\bar\rho)-m(c)\bigr)U(t,x)\de_x\phi
				-m(c)\de_xU(t,x)\phi + f(t,x,\bar\rho)\phi
				\bigr]
			\Bigr\} \dx\dt.
		\end{split}
	\end{equation}
	The first addend can be rewritten as
	\[
		\begin{split}
			&\int_0^T\int_\setR \abs{\bar\rho-c}\de_t\phi \dx\dt \\
			&= \int_0^T \left\{
			\sum_{i=1}^{N} \abs{\rho_i-c} \int_{x_{i-1}}^{x_i} \de_t\phi \dx
			+ c \int_{-\infty}^{x_0} \de_t\phi \dx
			+ c \int_{x_N}^\infty \de_t\phi \dx
			\right\} \dt \\
			&= -\int_0^T \sum_{i=1}^N \sign(\rho_i-c)\rho_i' \int_{x_{i-1}}^{x_i} \phi \dx\dt
			-\int_0^T \sum_{i=1}^N \abs{\rho_i-c}\bigl(\phi(x_i)x_i'-\phi(x_{i-1})x_{i-1}'\bigr) \dt
			\spliteq +c\int_0^T \bigl(\phi(x_N)x_N'-\phi(x_0)x_0'\bigr) \dt \\
			&\overset{\eqref{eq:ode-rho}}= \int_0^T \sum_{i=1}^N \sign(\rho_i-c) \rho_i \left[
			(x_i'-x_{i-1}') \dashint_{x_{i-1}}^{x_i} \phi \dx
			- \bigl(\phi(x_i)x_i'-\phi(x_{i-1})x_{i-1}'\bigr) \right] \dt
			\spliteq +c \int_0^T \sum_{i=1}^N \sign(\rho_i-c) \bigl(\phi(x_i)x_i'-\phi(x_{i-1})x_{i-1}'\bigr)
			+c\int_0^T \bigl(\phi(x_N)x_N'-\phi(x_0)x_0'\bigr) \dt
			\spliteq -\int_0^T \sum_{i=1}^N \sign(\rho_i-c) q_i' \dashint_{x_{i-1}}^{x_i} \phi \dx.
		\end{split}
	\]
	Integrating in space $U(t,x)\de_x\phi+\de_xU(t,x)\phi$, the remaining terms can be rewritten as
	\[
		\begin{split}
			&\int_0^T\!\!\int_\setR \Bigl\{
			\sign(\bar\rho-c)\bigl[\bigl(m(\bar\rho)-m(c)\bigr)U(t,x)\de_x\phi
				-m(c)\de_xU(t,x)\phi + f(t,x,\bar\rho)\phi
				\bigr]
			\Bigr\} \dx\dt \\
			&= \sum_{i=1}^N \int_0^T \sign(\rho_i-c) \int_{x_{i-1}}^{x_i} \bigl[\rho_i v(\rho_i) U(t,x) \partial_x \phi + f(t,x,\rho_i) \phi\bigr] \dx \dt
			\spliteq - cv(c) \sum_{i=1}^N \int_0^T \sign(\rho_i-c)
			\bigl[ U(t,x_i)\phi(x_i) - U(t,x_{i-1})\phi(x_{i-1}) \bigr]  \dt
			\spliteq - cv(c) \int_0^T
			\bigl[ \sign(\rho_0 -c)U(t,x_0)\phi(x_0) - \sign(\rho_N -c)U(t,x_N)\phi(x_N) \bigr] \dt .
		\end{split}
	\]
	Combining these two computations we can write \eqref{eq:quasi-entropy-lhs} as the sum of two terms $I + \II$ where
	\[
		\begin{split}
			I &=  c \int_0^T \sum_{i=1}^N \sign(\rho_i-c) \bigl(\phi(x_i)x_i' - \phi(x_{i-1})x_{i-1}'\bigr)
			+c\int_0^T \bigl(\phi(x_N)x_N'-\phi(x_0)x_0'\bigr) \dt
			\spliteq - cv(c) \sum_{i=1}^N \int_0^T \sign(\rho_i -c) \bigl[U(t,x_i)\phi(x_i) - U(t,x_{i-1})\phi(x_{i-1}) \bigr] \dt
			\spliteq - cv(c) \int_0^T \bigl[ \sign(\rho_0 -c)U(t,x_0)\phi(x_0) - \sign(\rho_N -c)U(t,x_N)\phi(x_N) \bigr] \dt  , \\
			\II &= \int_0^T \sum_{i=1}^N \sign(\rho_i-c) \rho_i \left[
			(x_i'-x_{i-1}') \dashint_{x_{i-1}}^{x_i} \phi \dx
			- \bigl(\phi(x_i)x_i'-\phi(x_{i-1})x_{i-1}'\bigr) \right] \dt
			\spliteq -\int_0^T \sum_{i=1}^N \sign(\rho_i-c) q_i' \dashint_{x_{i-1}}^{x_i} \phi \dx
			\spliteq +\sum_{i=1}^N \int_0^T \sign(\rho_i-c) \int_{x_{i-1}}^{x_i} \bigl[\rho_i v(\rho_i) U(t,x) \partial_x \phi + f(t,x,\rho_i) \phi\bigr] \dx \dt.
		\end{split}
	\]

	We claim that $I \geq 0$ and that $\II$ vanishes as $N \to \infty$.
	Using \eqref{eq:ode-x} we can compute
	\[
		\begin{split}
			I &=  c \int_0^T \sum_{i=1}^N \sign(\rho_i-c)
			\bigl[ \phi(x_i)\bigl(v_i-v(c)\bigr)U(t,x_i) - \phi(x_{i-1}) \bigl(v_{i-1}-v(c)\bigr)U(t,x_{i-1}) \bigr]
			\spliteq +c\int_0^T \bigl[ \sign(\rho_0-c)\bigl(v_0-v(c)\bigr)U(t,x_0)\phi(x_0)
				-\sign(\rho_N-c)\bigl(v_N-v(c)\bigr)U(t,x_N)\phi(x_N) \bigr]\dt \\
			&= -c \int_0^T \sum_{i=0}^N [\sign(\rho_{i+1}-c)-\sign(\rho_i-c)]
			\bigl(v_i-v(c)\bigr) U(t,x_i) \phi(x_i) \dt,
		\end{split}
	\]
	which is non-negative thanks to \eqref{eq:good-v-c} of \autoref{lem:good-v}.

	The rest of the proof is devoted to showing that $\II\to0$.
	We split $\II=\II_{\mathrm{adv}}+\II_{\mathrm{src}}$ where
	\begin{align*}
		\II_{\mathrm{adv}} & =
		\int_0^T \sum_{i=1}^N \sign(\rho_i-c) \rho_i \left[ (x_i'-x_{i-1}') \dashint_{x_{i-1}}^{x_i} \phi \dx
		- \bigl(\phi(x_i)x_i'-\phi(x_{i-1})x_{i-1}'\bigr) \right] \dt
		\spliteq +\int_0^T \sum_{i=1}^N \sign(\rho_i-c) \int_{x_{i-1}}^{x_i} \rho_i v(\rho_i) U(t,x) \partial_x \phi  \dx \dt, \\
		\II_{\mathrm{src}} & =
		-\int_0^T \sum_{i=1}^N \sign(\rho_i-c) q_i' \dashint_{x_{i-1}}^{x_i} \phi \dx
		+\int_0^T \sum_{i=1}^N \sign(\rho_i-c) \int_{x_{i-1}}^{x_i}  f(t,x,\rho_i) \phi \dx \dt.
	\end{align*}
	The term $\II_{\mathrm{adv}}$ can be rewritten as
	\[
		\begin{split}
			\II_{\mathrm{adv}}
			&= \int_0^T \sum_{i=1}^N \sign(\rho_i-c) \rho_i
			(x_i'-x_{i-1}') \left(\dashint_{x_{i-1}}^{x_i} \phi \dx - \phi(x_{i-1})\right) \dt
			\spliteq -\int_0^T \sum_{i=1}^N \sign(\rho_i-c) \rho_i x_i' \bigl(\phi(x_i)-\phi(x_{i-1})\bigr) \dt
			\spliteq +\int_0^T \sum_{i=1}^N \sign(\rho_i-c) \int_{x_{i-1}}^{x_i} \rho_i v(\rho_i) U(t,x) \partial_x \phi  \dx \dt, \\
			&= \int_0^T \sum_{i=1}^N \sign(\rho_i-c) \rho_i
			(x_i'-x_{i-1}') \left(\dashint_{x_{i-1}}^{x_i} \phi \dx - \phi(x_{i-1})\right) \dt
			\spliteq +\int_0^T \sum_{i=1}^N \sign(\rho_i-c)\rho_i
			\int_{x_{i-1}}^{x_i} \bigl[v(\rho_i)U(t,x) - v_iU(t,x_i)\bigr]\de_x\phi \dx\dt.
		\end{split}
	\]
	Observe that
	\[
		\begin{split}
			&\abs{x_i'-x_{i-1}'} \\
			&= \abs{v_iU_i-v_{i-1}U_{i-1}}
			\leq \abs{v(\rho_i)(U_i-U_{i-1})} + \abs{(v_i-v(\rho_i))U_i} - \abs{(v_{i-1}-v(\rho_i))U_{i-1}} \\
			&\overset{\eqref{eq:first-diff-U}}\leq v(\rho_i) [C_1+C_2\rho_i](x_i-x_{i-1})
			+\norm{v'}_{L^\infty([0,R(t)])} \bigl(\abs{\rho_i-\rho_{i-1}}\cdot\abs{U_{i-1}}
			+\abs{\rho_{i+1}-\rho_i}\cdot\abs{U_i}\bigr) \\
			&\leq \norm{v}_\infty [C_1+C_2\rho_i](x_i-x_{i-1})
			+\norm{v'}_{L^\infty([0,R(t)])} \left(\max_{i\in\{0,\dots,N\}}\abs{U_i}\right)
			\bigl(\abs{\rho_i-\rho_{i-1}}+\abs{\rho_{i+1}-\rho_i}\bigr)
		\end{split}
	\]
	and\[
		\begin{split}
			&\abs{v(\rho_i)U(t,x)-v_iU(t,x_i)} \\
			&\leq \abs{v(\rho_i)}\abs{U(t,x)-U(t,x_i)} + \abs{v(\rho_i)-v_i}\cdot\abs{U(t,x_i)} \\
			&\leq \norm{v}_\infty \norm{\de_xU}_{L^\infty(\supp\bar\rho)} (x_i-x)
			+ \norm{v'}_{L^\infty([0,R(t)])} \abs{\rho_{i+1}-\rho_i}
			\left(\max_{i\in\{0,\dots,N\}}\abs{U_i}\right).
		\end{split}
	\]
	From $U=V-\de_xW*\bar\rho$ and $\de_xU = \de_xV-D_x\de_xW*\bar\rho = \de_xV - \de_x^2W*\bar\rho-w\bar\rho$,
	using the assumptions \ref{as:V} and \ref{as:W}
	and \autoref{prop:bound-mass}, \autoref{prop:bound-support}, \autoref{prop:bound-density}
	we get
	\[
		\begin{split}
			\max_{i\in\{0,\dots,N\}}\abs{U_i}
			\leq \norm{U}_{L^\infty(\supp\bar\rho)}
			\leq F(t)\bigl[G\bigl(S(t)\bigr) + G\bigl(2S(t)\bigr)q(0)Q(t)\bigr]
		\end{split}
	\]
	and
	\[
		\begin{split}
			\norm{\de_xU}_{L^\infty(\supp\bar\rho)}
			&\leq \norm{\de_xV}_{L^\infty([-S(t),S(t)])}
			+ \norm{\de_x^2W}_{L^\infty([-2S(t),2S(t)])}\norm{\bar\rho}_1 + \abs{w}\norm{\bar\rho}_\infty \\
			&\leq F(t)\bigl[G\bigl(S(t)\bigr) + G\bigl(2S(t)\bigr)q(0)Q(t) + R(t)\bigr],
		\end{split}
	\]
	hence we can estimate (using also \autoref{rmk:diff-U-constants})
	\[
		\begin{split}
			&\abs{x_i'-x_{i-1}'} + \norm{v(\rho_i)U(t,\plchldr)-v_iU(t,x_i)}_{L^\infty([x_{i-1},x_i])} \\
			&\leq \norm{v}_\infty \bigl[C_1(t)+C_2(t)R(t)+\norm{\de_xU}_{L^\infty(\supp\bar\rho)}\bigr]
			\bigl(x_i-x_{i-1}\bigr)
			\spliteq +\norm{v'}_{L^\infty([0,R(t)])} \left(\max_{i\in\{0,\dots,N\}}\abs{U_i}\right)
			\bigl(\abs{\rho_i-\rho_{i-1}}+2\abs{\rho_{i+1}-\rho_i}\bigr) \\
			&\leq 2\norm{v}_\infty F(t)\bigl[G\bigl(S(t)\bigr)+G\bigl(2S(t)\bigr)q(0)Q(t)+R(t)\bigr]
			(x_i-x_{i-1})
			\spliteq +F(t)G\bigl(R(t)\bigr)\bigl[G\bigl(S(t)\bigr)+G\bigl(2S(t)\bigr)q(0)Q(t)\bigr]
			\bigl(\abs{\rho_i-\rho_{i-1}}+2\abs{\rho_{i+1}-\rho_i}\bigr) \\
			&\leq F(t) \bigl[2\norm{v}_\infty+G\bigl(R(t)\bigr)\bigr]
			\bigl[G\bigl(S(t)\bigr)+G\bigl(2S(t)\bigr)q(0)Q(t)+R(t)\bigr]
			\spliteq\qquad \cdot\bigl((x_i-x_{i-1})+\abs{\rho_i-\rho_{i-1}}+2\abs{\rho_{i+1}-\rho_i}\bigr).
		\end{split}
	\]
	This allows us to deduce that
	\[
		\begin{split}
			\abs{\II_{\mathrm{adv}}}
			&\leq \int_0^T \sum_{i=1}^N \rho_i
			\bigl(\abs{x_i'-x_{i-1}'}
			+ \norm{v(\rho_i)U(t,x)-v_iU(t,x_i)}_{L^\infty([x_{i-1},x_i])}\bigr)
			\int_{x_{i-1}}^{x_i} \abs{\de_x\phi} \dx \dt \\
			&= \int_0^T \!\! \int_\setR \abs{\de_x\phi(x)} \d\mu_{1,t}^{\mathrm{adv}}(x) \dt
		\end{split}
	\]
	where the measure
	\[
		\begin{split}
			\mu_{1,t}^{\mathrm{adv}}
			&= \sum_{i=1}^N \rho_i(t) \Bigl(\abs{x_i'(t)-x_{i-1}'(t)}
			+\norm*{v(\rho_i(t))U(t,x)-v_i(t)U\bigl(t,x_i(t)\bigr)}_{L^\infty([x_{i-1}(t),x_i(t)])}\Bigr)
			\bm1_{[x_{i-1}(t),x_i(t)]} \leb^1
		\end{split}
	\]
	has mass controlled by
	\[
		\begin{split}
			&\mu_{1,t}^{\mathrm{adv}}(\setR) \\
			&\leq \sum_{i=1}^N \rho_i(t) \left(\abs{x_i'(t)-x_{i-1}'(t)}
			+ \norm*{v(\rho_i(t))U(t,x)-v_i(t)U\bigl(t,x_i(t)\bigr)}_{L^\infty([x_{i-1}(t),x_i(t)])}\right)
			[x_i(t)-x_{i-1}(t)] \\
			&\leq \sum_{i=1}^N q_i(t) \left(\abs{x_i'(t)-x_{i-1}'(t)}
			+ \norm*{v(\rho_i(t))U(t,x)-v_i(t)U\bigl(t,x_i(t)\bigr)}_{L^\infty([x_{i-1}(t),x_i(t)])}\right) \\
			&\leq \left(\max_{i\in\{1,\dots,N\}} q_i(t)\right)
			F(t) \bigl[2\norm{v}_\infty+G\bigl(R(t)\bigr)\bigr]
			\bigl[G\bigl(S(t)\bigr)+G\bigl(2S(t)\bigr)q(0)Q(t)+R(t)\bigr]
			\spliteq\hspace{2cm} \cdot\sum_{i=1}^N \bigl((x_i-x_{i-1})+\abs{\rho_i-\rho_{i-1}}+2\abs{\rho_{i+1}-\rho_i}\bigr) \\
			&\leq \left(\max_{i\in\{1,\dots,N\}} q_i(0)\right)
			Q(t) F(t) \bigl[2\norm{v}_\infty+G\bigl(R(t)\bigr)\bigr]
			\spliteq\hspace{5cm} \cdot\bigl[G\bigl(S(t)\bigr)+G\bigl(2S(t)\bigr)q(0)Q(t)+R(t)\bigr]
			[2S(t)+3B(t)].
		\end{split}
	\]
	On the other hand
	\[
		\begin{split}
			\abs{\II_{\mathrm{src}}}
			&= \abs*{\int_0^T \sum_{i=1}^N \sign(\rho_i-c) \int_{x_{i-1}}^{x_i}
				f(t,x,\rho_i) \left(\phi(x) -\dashint_{x_{i-1}}^{x_i} \phi(y) \dy \right) \dx\dt} \\
			&\leq \int_0^T \sum_{i=1}^N \int_{x_{i-1}}^{x_i} \abs{f(t,x,\rho_i)}
			\int_{x_{i-1}}^{x_i} \abs{\de_x\phi(y)} \dy \dx\dt
			= \int_0^T \!\! \int_\setR \abs{\de_x\phi(y)} \d\mu_{1,t}^{\mathrm{src}}(y)\dt
		\end{split}
	\]
	where, thanks to assumption \ref{as:f}, the measure
	\[
		\mu_{1,t}^{\mathrm{src}}
		= \sum_{i=1}^N \left(\int_{x_{i-1}(t)}^{x_i(t)} \abs*{f\bigl(t,x,\rho_i(t)\bigr)} \dx\right)
		\bm1_{[x_{i-1}(t),x_i(t)]} \leb^1
	\]
	has mass controlled by
	\[
		\begin{split}
			\mu_{1,t}^{\mathrm{src}}(\setR)
			&\leq \sum_{i=1}^N c_f F(t) \rho_i(t) \bigl(x_i(t)-x_{i-1}(t)\bigr)^2
			\leq \sum_{i=1}^N c_f F(t) q_i(t) \bigl(x_i(t)-x_{i-1}(t)\bigr) \\
			&\leq \left(\max_{i\in\{1,\dots,N\}} q_i(0)\right)
			c_f F(t) Q(t) \sum_{i=1}^N \bigl(x_i(t)-x_{i-1}(t)\bigr)
			\leq 2 \left(\max_{i\in\{1,\dots,N\}} q_i(0)\right) c_f F(t) Q(t) S(t).
		\end{split}
	\]
	Putting together the estimates for $\II_{\mathrm{adv}}$ and $\II_{\mathrm{src}}$ involving
	$\mu_{1,t}^{\mathrm{adv}}$ and $\mu_{1,t}^{\mathrm{srs}}$, we conclude that $\bar\rho^N$ is a
	$(\mu_{0,t}^N,\mu_{1,t}^N)$--quasi-entropy solution with $\mu_{0,t}^N=0$ and
	$\mu_{1,t}^N = \mu_{1,t}^{\mathrm{adv}} + \mu_{1,t}^{\mathrm{srs}}$ with the mass estimated as in the
	claim.

	The fact that $\bar\rho$ is a quasi-weak solution with $\nu_0=\mu_0=0$ and $\nu_1=2\mu_1$
	follows with a standard argument by combining the quasi-entropy inequalities with $c=0$ and $c=R(T)$.
	See the end of the proof of \cite[Proposition~2.13]{Radici-Stra-2023} for the details.
\end{proof}

\begin{proposition}\label{prop:entropy-solution}
	Let $v$, $V$, $W$, $f$ satisfy the \autoref{ass}.
	For every $N\in\setN_+$ let $\bar\rho^N$ be the piecewise constant density
	associated to particles $(x_0,\dots,x_N)$ and masses $(q_1,\dots,q_N)$ solving \eqref{eq:ode},
	with initial condition given by \autoref{lem:initial-datum}.
	Let $\rho\in L^\infty_\loc([0,\infty)\times\setR\bigr)$ be a limit provided by \autoref{prop:compactness}.
	Then $\rho$ is an entropy solution to \eqref{eq:pde} in the sense of \autoref{def:entropy-solution}.
\end{proposition}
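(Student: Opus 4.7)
The plan is to pass to the limit $N\to\infty$ in the quasi-entropy inequality satisfied by $\bar\rho^N$ provided by \autoref{prop:quasi-entropy-solution}. The decisive observation is that, by \autoref{lem:initial-datum}, $\max_i q_i^N(0)\to 0$ as $N\to\infty$; consequently the error measure $\mu_{1,t}^N$ satisfies $\mu_{1,t}^N(\setR)\leq\bigl(\max_i q_i^N(0)\bigr)H(t)\to 0$ uniformly on compact time intervals, and the right-hand side of the quasi-entropy inequality vanishes in the limit for every fixed test function $\phi\in C_c^\infty\bigl([0,T)\times\setR;[0,\infty)\bigr)$.

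To pass to the limit on the left-hand side, I would combine the strong $L^p_\loc$ convergence $\bar\rho^N\to\rho$ for every $p\in[1,\infty)$ and the a.e.\ convergence along a subsequence (both from \autoref{prop:compactness}) with the uniform bound $\bar\rho^N\leq R(T)$ and the local Lipschitz/boundedness properties of $m$, $V$, $\de_x W$, $f$ granted by the \autoref{ass}. Writing $U^N=V-\de_x W*\bar\rho^N$ and $U=V-\de_x W*\rho$, the regularity of $W$ from \ref{as:W} yields $U^N\to U$ locally uniformly, and since $\de_x U^N=\de_x V-\de_x^2 W*\bar\rho^N-w\bar\rho^N$, the strong $L^p_\loc$ convergence of $\bar\rho^N$ yields $\de_x U^N\to\de_x U$ in $L^p_\loc$ (the Dirac contribution encoded by $w$ poses no difficulty since $w\bar\rho^N$ converges strongly). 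Each addend in the entropy integrand then converges by dominated convergence for almost every $c\in[0,R(T)]$.

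The delicate point is the sign term: for a.e.\ $c\in[0,R(T)]$ the level set $\{\rho=c\}\subset[0,T]\times\setR$ has zero Lebesgue measure, so $\sign(\bar\rho^N-c)\to\sign(\rho-c)$ a.e.\ along the subsequence, and combined with the $L^\infty$ bound this gives convergence of the sign-weighted products. This yields the entropy inequality of \autoref{def:entropy-solution} for a.e.\ $c\geq 0$; to extend it to every $c\geq 0$ I would use that both sides depend continuously on $c$ by dominated convergence (both $|\rho-c|$ and $\sign(\rho-c)(m(\rho)-m(c))$ are continuous in $c$ away from the null level sets and uniformly dominated, via $\rho\leq R(T)$ from \autoref{prop:bound-density}), so the inequality survives the passage to the closure. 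The required regularity $\rho\in C\bigl([0,\infty);L^1_\loc(\setR)\bigr)\cap L^\infty_\loc\bigl([0,\infty);BV(\setR)\bigr)$ follows from \autoref{cor:continuity} and the bounds inherited in the limit.

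The main obstacle I anticipate is the simultaneous handling of the discontinuous $\sign$ function and the potentially singular component $w\bar\rho^N$ inside $\de_x U^N$: both must be absorbed into the strong-convergence framework without losing any mass or measure. This is achieved by leveraging the combination of the strong $L^1$ convergence from \autoref{prop:compactness} with the $L^\infty$ bound from \autoref{prop:bound-density}, which together support a bounded-convergence argument in every integral appearing in the quasi-entropy inequality.
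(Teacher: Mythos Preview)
Your strategy---first proving the entropy inequality for almost every $c$ (those for which $\{\rho=c\}$ is Lebesgue-null) and then extending to all $c$ by continuity---is close to working, but the extension step has a gap. You justify continuity in $c$ only for the factors $|\rho-c|$ and $\sign(\rho-c)(m(\rho)-m(c))$, and these are indeed continuous in $c$ for every $(t,x)$. The remaining addends $\sign(\rho-c)\,m(c)\,\de_xU\,\phi$ and $\sign(\rho-c)\,f(t,x,\rho)\,\phi$, however, are \emph{not}: on $\{\rho=c\}$ one has $\sign(\rho-c_n)=\sign(c-c_n)$, which flips with the side of approach, and neither $m(c)\de_xU$ nor $f(t,x,\rho)$ has any reason to vanish there. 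So the map $c\mapsto\int(\cdots)$ is genuinely discontinuous at the exceptional values of $c$, and a plain density argument does not conclude. The gap is cheap to repair: approaching a fixed $c>0$ from both sides along good values $c_n^\pm$ gives, after dominated convergence on $\{\rho\neq c\}$ and on $\{\rho=c\}$ separately, the pair of inequalities
\[
\int(\cdots)\big|_{c}\ \pm\ \int_{\{\rho=c\}}\bigl[f(t,x,c)-m(c)\de_xU\bigr]\phi \ \geq\ 0,
\]
and adding them yields the desired inequality at $c$. At $c=0$ only one side is available, but there $m(0)=0$ and $f(t,x,0)=0$ by \ref{as:f}, so the obstruction term vanishes.

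With this fix your route is genuinely different from the paper's and in fact more elementary. The paper does not reduce to almost every $c$: it attacks every fixed $c$ directly and confronts head-on the non-convergence of $\sign(\bar\rho^N-c)$ on $Z=\{\rho=c\}$. To do so it first passes the quasi-\emph{weak} inequality to the limit to establish that $\rho$ is a weak solution, and then uses fine properties of $BV$ functions together with a Besicovitch covering argument to deduce that $f(t,x,\rho)-m(c)\de_xU=0$ a.e.\ on $Z$, which kills the integrand carrying the non-convergent sign factor. Your approach sidesteps that machinery entirely at the price of the detour through almost every $c$; the paper's route is heavier but delivers the additional qualitative information that the PDE is satisfied pointwise a.e.\ on every level set.
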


Since the proof follows a similar structure as the one of \cite[Proposition~2.14]{Radici-Stra-2023},
we only sketch the main steps.
\begin{proof}
	To avoid confusion, let us denote with $U^N=V-\de_xW*\bar\rho^N$ and $U=V-\de_xW*\rho$
	the free velocity fields associated to $\bar\rho^N$ and $\rho$ respectively.
	We first show that $\rho$ is a weak solution, namely that for every $\phi\in C^\infty_c\bigl((0,T)\times\setR\bigr)$ we have
	\[
		\int_0^T \!\! \int_\setR \bigl[ \rho\de_t\phi + \rho v(\rho)U(t,x)\de_x\phi + f(t,x,\rho)\phi \bigr] \dx\dt
		= 0.
	\]
	It is sufficient to show that the left hand sides of the quasi-weak inequalities for $\bar\rho^N$ pass
	to the limit, because by \autoref{prop:quasi-entropy-solution} the right hand sides are controlled by
	measures $\nu^N$ with vanishing masses.
	From \autoref{prop:compactness} we have that $\bar\rho^N\to\rho$ in $L^1([0,T]\times\setR)$.
	From the Lipschitz regularity of $f$ with respect to $\rho$ granted by the assumption \ref{as:f},
	it is immediate to see that
	\[
		\lim_{N\to\infty}
		\int_0^T \!\! \int_\setR \bigl[\bar\rho^N\de_t\phi + f(t,x,\bar\rho^N)\phi\bigr] \dx\dt
		=
		\int_0^T \!\! \int_\setR \bigl[\rho\de_t\phi + f(t,x,\rho)\phi\bigr] \dx\dt.
	\]
	For the remaining term we instead have
	\[
		\begin{split}
			&\abs*{\int_0^T\!\!\int_\setR \bigl[m(\bar\rho^N)U^N(t,x)-m(\rho)U(t,x)\bigr] \de_x\phi \dx\dt} \\
			&\leq \Lip(\phi) \int_0^T\!\!\int_\setR
			\left\{m(\bar\rho^N)\abs{U^N-U}+\abs{m(\bar\rho^N)-m(\rho)}\cdot\abs{U}\right\} \dx\dt \\
			&\leq \Lip(\phi) \int_0^T \norm{m}_{L^\infty([0,R(t)])}
			\norm{\de_xW*(\bar\rho^N-\rho)}_{L^1([-S(t),S(t)])} \dt
			\spliteq +\Lip(\phi) \int_0^T \norm{m'}_{L^\infty([0,R(t)])} \norm{\bar\rho^N-\rho}_1
			\norm{U}_{L^\infty([-S(t),S(t)])} \dt \\
			&\leq \Lip(\phi) \int_0^T \Bigl\{
			\norm{m}_{L^\infty([0,R(t)])} \norm{\de_xW}_{L^\infty([-2S(t),2S(t)])}
			\spliteq\hspace{2.5cm} +\norm{m'}_{L^\infty([0,R(t)])} \norm{U}_{L^\infty([-S(t),S(t)])}
			\Bigr\} \norm{\bar\rho^N-\rho}_1 \dt,
		\end{split}
	\]
	which goes to zero because $\bar\rho^N\to\rho$ in $L^1\bigl([0,T]\times\setR\bigr)$ and all the other norms
	are bounded by
	\begin{align*}
		\norm{m}_{L^\infty([0,R(t)])}            & \leq \norm{v}_\infty R(t),                    \\
		\norm{m'}_{L^\infty([0,R(t)])}           & \leq \norm{v}_\infty + R(t)G\bigl(R(t)\bigr), \\
		\norm{\de_xW}_{L^\infty([-2S(t),2S(t)])} & \leq F(t)G\bigl(2S(t)\bigr),                  \\
		\norm{U}_{L^\infty([-S(t),S(t)])}        & \leq
		F(t)\bigl[G\bigl(S(t)\bigr) + G\bigl(2S(t)\bigr)q(0)Q(t)\bigr].
	\end{align*}
	This proves that $\rho$ is a weak solution.

	Let us now show that also the quasi-entropy inequalities for $\bar\rho^N$ pass to the limit and
	therefore $\rho$ is an entropy solution.
	Clearly $\int_0^T\!\!\int_\setR \abs{\bar\rho^N-c}\de_t\phi\dx\dt \to \int_0^T\!\!\int_\setR \abs{\rho-c}\de_t\phi\dx\dt$ because $\bar\rho^N\to\rho$ in $L^1_\loc$. Then we also have
	\[
		\begin{split}
			&\abs*{\int_0^T\!\!\int_\setR \left[
					\sign(\bar\rho^N-c)\bigl(m(\bar\rho^N)-m(c)\bigr)U^N(t,x)
					- \sign(\rho-c)\bigl(m(\rho)-m(c)\bigr)U(t,x)
					\right]} \de_x\phi \dx\dt \\
			&\leq \Lip(\phi) \int_0^T\!\!\int_\setR
			\abs*{\sign(\bar\rho^N-c)\bigl(m(\bar\rho^N)-m(c)\bigr)-\sign(\rho-c)\bigl(m(\rho)-m(c)\bigr)}
			\cdot\abs{U^N} \dx\dt
			\spliteq +\Lip(\phi) \int_0^T\!\!\int_\setR
			\sign(\rho-c)\bigl(m(\rho)-m(c)\bigr) \abs{U^N-U} \dx\dt,
		\end{split}
	\]
	which goes to zero with the same argument we used before because the function
	$\rho\mapsto\sign(\rho-c)\bigl(m(\rho)-m(c)\bigr)$ is Lipschitz with constant bounded by
	$\norm{m'}_{L^\infty([0,R(t)])}$ and the norms $\abs{U^N}$ and $\abs{U^N-U}$ are estimated as before.
	It is only left to show that $E_N\to E$ where
	\begin{align*}
		E_N & =
		\int_0^T\!\!\int_\setR \sign(\bar\rho^N-c) \bigl[f(t,x,\bar\rho^N)-m(c)\de_xU^N(t,x)\bigr] \phi \dx\dt, \\
		E   & =
		\int_0^T\!\!\int_\setR \sign(\rho-c) \bigl[f(t,x,\rho)-m(c)\de_xU(t,x)\bigr] \phi \dx\dt.
	\end{align*}
	The difference between the two can be written as $E_N-E=I_N+\II_N$, where
	\begin{align*}
		I_N   & = \int_0^T\!\!\int_\setR \sign(\bar\rho^N-c) \left\{
		\bigl[f(t,x,\bar\rho^N)-f(t,x,\rho)\bigr] - m(c)[\de_xU^N(t,x)-\de_xU(t,x)] \right\} \phi \dx\dt, \\
		\II_N & =\int_0^T\!\!\int_\setR
		\bigl(\sign(\bar\rho^N-c)-\sign(\rho-c)\bigr) \cdot[f(t,x,\rho)-m(c)\de_xU(t,x)] \phi \dx\dt.
	\end{align*}
	The term $I_N\to0$ with a slight variation of the argument used before.
	Indeed, the term with $[f(t,x,\bar\rho^N)-f(t,x,\rho)]$ is treated exactly as before exploiting
	the Lipschitzianity of $f$ in its third argument.
	For the other, let $a>0$ be such that $\supp\phi\subset(0,T)\times(-a,a)$. Then
	\[
		\begin{split}
			&\abs*{\int_0^T\!\!\int_\setR \sign(\bar\rho^N-c)m(c)[\de_xU^N-\de_xU] \phi \dx\dt} \\
			&\leq \norm{\phi}_\infty m(c) \int_0^T
			\norm*{\de_x\bigl(\de_xW*(\bar\rho^N-\rho)\bigr)}_{L^1([-a,a])} \dt \\
			&\leq \norm{\phi}_\infty m(c) \int_0^T \left(\norm{\de_x^2W}_{L^1([-a-S(t),a+S(t)])}+\abs{w(t)}\right)
			\norm{\bar\rho^N-\rho}_1 \dt,
		\end{split}
	\]
	which goes to zero too.
	Showing that $\II_N\to0$ is much trickier because $\bigl(\sign(\bar\rho^N-c)-\sign(\rho-c)\bigr)\to0$
	pointwise only in $Z^c$, where
	\[
		Z=\set{(t,x)\in[0,\infty)\times\setR}{\rho(t,x)=c}.
	\]
	In $Z^c$ we can then use Lebesgue dominated convergence, with the domination
	\[
		2\bigl(\abs{f(t,x,\rho)} + m(c)\abs{\de_xU(t,x)}\bigr)\phi \in L^1([0,T]\times\setR).
	\]
	Inside $Z$, as done in \cite[Proposition~2.14]{Radici-Stra-2023} using the Besicovitch covering theorem
	and fine properties of $BV$ functions, we must instead deduce that $f(t,x,\rho)-m(c)\de_xU(t,x)=0$
	almost everywhere. This information is recovered from the fact that $\rho$ is a weak solution (this is
	the reason we needed to prove that first), heuristically because $\de_t\rho=\de_x\rho=0$ a.e.\ in $Z$
	therefore from the continuity equation we formally get
	\[
		\begin{split}
			0
			&= \de_t\rho + \de_x[m(\rho)U(t,x)] - f(t,x,\rho) \\
			&= \de_t\rho + m'(\rho)\de_x\rho U(t,x) + m(\rho)\de_xU(t,x) - f(t,x,\rho) \\
			&= m(c)\de_xU(t,x) - f(t,x,\rho) \qquad\text{a.e.\ in $Z$.} \qedhere
		\end{split}
	\]
\end{proof}

\begin{proof}[Proof of \autoref{thm:main}]
	By \autoref{prop:bound-mass}, \autoref{prop:bound-support}, \autoref{prop:bound-density} and
	\autoref{prop:bound-TV}, the approximations $\bar\rho^N$ enjoy the uniform a priori estimates
	\begin{align*}
		\norm{\bar\rho^N(t,\plchldr)}_{L^1(\setR)} & \leq \norm{\bar\rho^N(0)}_1 Q(t), &
		\bar\rho^N(t,\plchldr)                     & \leq R(t),                          \\
		\supp \bar\rho^N(t,\plchldr)               & \subseteq [-S(t),S(t)],           &
		\TV\bigl(\bar\rho^N(t,\plchldr)\bigr)      & \leq B(t),
	\end{align*}
	for every $t\in[0,\infty)$ for some increasing functions $Q,R,S,B:[0,\infty)\to[0,\infty)$.

	By \autoref{prop:compactness} there exists a limit density $\rho$ such that the approximations
	$\bar\rho^N\to\rho$ in $L^1_\loc\bigl([0,\infty)\times\setR\bigr)$ and $\rho$ inherits the same
	estimates
	\begin{align*}
		\norm{\rho(t,\plchldr)}_{L^1(\setR)} & \leq \norm{\rho_0}_1 Q(t), &
		\rho(t,\plchldr)                     & \leq R(t),                   \\
		\supp \rho(t,\plchldr)               & \subseteq [-S(t),S(t)],    &
		\TV\bigl(\rho(t,\plchldr)\bigr)      & \leq B(t),
	\end{align*}
	for every $t\in[0,\infty)$.
	The regularity in time of $\rho$ is provided by \autoref{cor:continuity}.

	By \autoref{prop:quasi-entropy-solution} the approximations $\bar\rho^N$ are quasi-entropy solutions of
	\eqref{eq:pde} according to \autoref{def:quasi-entropy-solution}, and thanks to
	\autoref{prop:entropy-solution} this information passes to the limit and implies that $\rho$ is an
	actual entropy solution of \eqref{eq:pde} according to \autoref{def:entropy-solution}.
\end{proof}

\phantomsection
\addcontentsline{toc}{section}{\refname}
\printbibliography

\end{document}